\newcommand{\figins}[3] 
{\raisebox{#1pt}{\includegraphics[height=#2 in]{#3}}}
\newcommand{\sseq}{{\rm SSeq}}
\newcommand{\ii}{ \textbf{\textit{i}}}
\newcommand{\jj}{ \textbf{\textit{j}}}
\newcommand{\onel}{{\mathbf 1}_{\lambda}}
\newcommand{\onelp}{{\mathbf 1}_{\lambda'}}
\newcommand{\refequal}[1]{\xy {\ar@{=}^{#1}
(-1,0)*{};(1,0)*{}};
\endxy}
\newcommand{\FF}{\mathbf{2Foam}_{/\ell}}
\newcommand{\Ucat}{\cal{U}_k}
\newcommand{\Uup}{\xy {\ar (0,-3)*{};(0,3)*{} };(0,0)*{\bullet};(2,0)*{};(-2,0)*{};\endxy}
\newcommand{\Udown}{\xy {\ar (0,3)*{};(0,-3)*{} };(0,0)*{\bullet};(2,0)*{};(-2,0)*{};\endxy}
\newcommand{\Ucupr}{\xy (-2,1)*{}; (2,1)*{} **\crv{(-2,-3) & (2,-3)} ?(1)*\dir{>}; \endxy}
\newcommand{\Ucupl}{\xy (2,1)*{}; (-2,1)*{} **\crv{(2,-3) & (-2,-3)}?(1)*\dir{>};\endxy}
\newcommand{\Ucapr}{\xy (-2,-1)*{}; (2,-1)*{} **\crv{(-2,3) & (2,3)}?(1)*\dir{>};\endxy\;\;}
\newcommand{\Ucapl}{\xy (2,-1)*{}; (-2,-1)*{} **\crv{(2,3) &(-2,3) }?(1)*\dir{>};\endxy\;}
\newcommand{\Ucross}{\xy {\ar (2.5,-2.5)*{};(-2.5,2.5)*{}}; {\ar (-2.5,-2.5)*{};(2.5,2.5)*{} };
(4,0)*{};(-4,0)*{};\endxy}
\newcommand{\Ucrossd}{\xy {\ar (2.5,2.5)*{};(-2.5,-2.5)*{}}; {\ar (-2.5,2.5)*{};(2.5,-2.5)*{} };
(4,0)*{};(-4,0)*{};\endxy}
\newcommand{\BOX}{\hbox {$\sqcap$ \kern -1em $\sqcup$}}
\newcommand{\To}{\Rightarrow}
\newcommand{\Hom}{{\rm Hom}}
\newcommand{\HOMU}{{\rm HOM_{\Ucat}}}
\renewcommand{\to}{\rightarrow}
\newcommand{\maps}{\colon}
\newcommand{\scs}{\scriptstyle}
\theoremstyle{definition}
\newtheorem{thm}{Theorem}[section]
\newtheorem{lem}[thm]{Lemma}
\newtheorem{defn}[thm]{Definition}
        \newcommand{\be}{\begin{equation}}
        \newcommand{\ee}{\end{equation}}
        \newcommand{\ba}{\begin{eqnarray}}
        \newcommand{\ea}{\end{eqnarray}}
        \newcommand{\ban}{\begin{eqnarray*}}
        \newcommand{\ean}{\end{eqnarray*}}
        \newcommand{\barr}{\begin{array}}
        \newcommand{\earr}{\end{array}}
\numberwithin{equation}{section}
\def\emph#1{{\sl #1\/}}
\let\phi=\varphi
\let\epsilon=\varepsilon
\def\N{{\mathbbm N}}
\def\R{{\mathbbm R}}
\def\Z{{\mathbbm Z}}
\def\F{{\mathbbm F}}
\def\cal#1{\mathcal{#1}}%
\def\1{\mathbbm{1}}%
\def\nn{\notag}
\def\mf{\mathfrak}
\def\shuffle{\,\raise 1pt\hbox{$\scriptscriptstyle\cup{\mskip
               -4mu}\cup$}\,}
\newcommand{\HH}{\mathcal{H}}
\newcommand{\lowrru}[1]{\xybox{%
  (-8,0)*{};
  (8,0)*{};
  (-6,-18)*{};(6,-9)*{} **\crv{(-6,-13) & (6,-15)} ?(1)*\dir{>};
  (6,-9)*{};(6,0)*{}  **\dir{-} ?(.3)*\dir{ }+(2,0)*{\scs {\bf j}};
}}
\newcommand{\lowllu}[1]{\xybox{%
  (-8,0)*{};
  (8,0)*{};
  (6,-18)*{};(-6,-9)*{} **\crv{(6,-13) & (-6,-15)} ?(1)*\dir{>};
  (-6,-9)*{};(-6,0)*{}  **\dir{-} ?(.3)*\dir{ }+(-2,0)*{\scs {\bf j}};
}}
\newcommand{\bbe}[1]{\xybox{%
  (-2,0)*{};
  (2,0)*{};
  (0,0);(0,-18) **\dir{-}; ?(.5)*\dir{<}+(2.3,0)*{\scriptstyle{#1}};
}}
\newcommand{\bbsid}{\xybox{%
  (-2,0)*{};
  (2,0)*{};
  (0,10);(0,4) **\dir{-};
}}
\newcommand{\bbpef}[1]{\xybox{%
  (-6,0)*{};
  (6,0)*{};
  (-4,0)*{}="t1";
  (4,0)*{}="t2";
  "t1";"t2" **\crv{(-4,-6) & (4,-6)}; ?(.15)*\dir{>} ?(.9)*\dir{>}
   ?(.5)*\dir{}+(0,-2)*{\scriptstyle{#1}};
}}
\newcommand{\bbpfe}[1]{\xybox{%
  (-6,0)*{};
  (6,0)*{};
  (-4,0)*{}="t1";
  (4,0)*{}="t2";
  "t2";"t1" **\crv{(4,-6) & (-4,-6)}; ?(.15)*\dir{>} ?(.9)*\dir{>}
  ?(.5)*\dir{}+(0,-2)*{\scriptstyle{#1}};
}}
\newcommand{\bbcfe}[1]{\xybox{%
  (-6,0)*{};
  (6,0)*{};
  (-4,0)*{}="t1";
  (4,0)*{}="t2";
  "t1";"t2" **\crv{(-4,6) & (4,6)}; ?(.15)*\dir{>} ?(.9)*\dir{>}
  ?(.5)*\dir{}+(0,2)*{\scriptstyle{#1}};
}}
\newcommand{\bbcef}[1]{\xybox{%
  (-6,0)*{};
  (6,0)*{};
  (-4,0)*{}="t1";
  (4,0)*{}="t2";
  "t2";"t1" **\crv{(4,6) & (-4,6)}; ?(.15)*\dir{>}
  ?(.9)*\dir{>} ?(.5)*\dir{}+(0,2)*{\scriptstyle{#1}};
}}
\newcommand{\ccbub}[2]{
\xybox{%
 (-6,0)*{};
  (6,0)*{};
  (-4,0)*{}="t1";
  (4,0)*{}="t2";
  "t2";"t1" **\crv{(4,6) & (-4,6)}; ?(.7)*\dir{}+(-2,0)*{\scs #2}
  ?(.05)*\dir{>} ?(1)*\dir{>};
  "t2";"t1" **\crv{(4,-6) & (-4,-6)};
   ?(.3)*\dir{}+(0,0)*{\bullet}+(0,-3)*{\scs {#1}};
}}
\newcommand{\cbub}[2]{
\xybox{%
 (-6,0)*{};
  (6,0)*{};
  (-4,0)*{}="t1";
  (4,0)*{}="t2";
  "t2";"t1" **\crv{(4,6) & (-4,6)};?(.7)*\dir{}+(-2,0)*{\scs #2};
   ?(0)*\dir{<} ?(.95)*\dir{<};
  "t2";"t1" **\crv{(4,-6) & (-4,-6)};
   ?(.3)*\dir{}+(0,0)*{\bullet}+(0,-3)*{\scs {#1}};
}}
\newcommand{\bbdl}[1]{\xybox{%
  (2,0);(0,-8) **\crv{(2,-2)&(0,-6)}; ?(.5)*\dir{>}
}}
\newcommand{\bbdlu}[1]{\xybox{%
  (2,0);(0,-8) **\crv{(2,-2)&(0,-6)}; ?(.5)*\dir{<}
}}
\newcommand{\bbdr}[1]{\xybox{%
  (-2,0);(0,-8) **\crv{(-2,-2)&(0,-6)}; ?(.5)*\dir{>}
}}
\newcommand{\bbdru}[1]{\xybox{%
  (-2,0);(0,-8) **\crv{(-2,-2)&(0,-6)}; ?(.5)*\dir{<}
}}
\title{$\mathfrak{sl}_3$-Foams and the Khovanov-Lauda categorification of 
quantum $\mathfrak{sl}_k$.}
\author{Marco Mackaay}
\begin{document}
\maketitle

%
\begin{abstract} 
In this paper I define certain interesting 2-functors from the 
Khovanov-Lauda 2-category which 
categorifies quantum $\mathfrak{sl}_k$, for any $k>1$, to a 2-category 
of universal $\mathfrak{sl}_3$ foams with corners. For want of a better name 
I use the term {\em foamation} to indicate those 2-functors. I conjecture 
the existence of similar 2-functors to the 2-category of 
$\mathfrak{sl}_n$ foams with corners, for any $n>1$. 
\end{abstract}
%
%
%
%
\section{Introduction}\label{sec:intro}

In this paper I relate the Khovanov-Lauda (KL) categorification of quantum 
$\mathfrak{sl}_k$, for $k>1$, defined in \cite{KL}, 
to the universal $\mathfrak{sl}_3$-foams, defined in \cite{MV}. Since both 
can be defined in terms of 2-categories, the most natural formulation of the 
relation is as a 2-functor, which I call {\em foamation}, because it produces 
a foam for each KL-diagram. As a matter of fact there are several such 
2-functors, all very similar, depending on a finite parameter. 

Unfortunately there are some signs which do not work out well under 
foamation, so I have defined everything over $\F_2$. Hopefully this 
problem can be solved in the future. If not, one wonders if 
foamation leads to a slightly different categorification of quantum 
$\mathfrak{sl}_k$. 

With $\mathfrak{sl}_2$-foams, defined by Bar-Natan~\cite{B-N}, foamation 
also works, but there is not much to check, since the image of the KL-diagrams 
is mostly zero. I conjecture that foamation also works with 
$\mathfrak{sl}_n$-foams, for $n\geq 4$, 
using the Kapustin-Li formula as in~\cite{MSV}.  
I have checked some relations for $n=4$ and $n=5$ and they are 
preserved by foamation. To 
generalize this to higher $n$ one would have to compute the Kapustin-Li 
formula for foams with facets with arbitrary thickness, which is a 
computational challenge in its own right. 

The foamation 2-functors should be related to the representation 
2-functors $\Gamma_N$ in~\cite{KL}. If foamation exists for all $n\geq 2$, 
then it should be a faithful family of 2-functors, meaning that for $n$ 
big enough foamation is faithful on all homogeneous elements of a fixed 
degree. The latter conjecture also shows an interesting feature of foamation: 
if true, it would allow one, in principal, to find all the relations in the 
categorification 
of quantum $\mathfrak{sl}_k$ just from the Kapustin-Li formula. Since it is 
also the Kapustin-Li formula which, in a certain sense, gives rise to the 
$\mathfrak{sl}_n$ link homologies~\cite{MSV}, a nice unifying picture seems 
to arrise.        

\section{The 2-category $\Ucat$}
\label{sec:catquantumslk}

In this section we recall the definition of Khovanov and Lauda's 
categorification $\Ucat$ of the quantum groups $U_q(\mathfrak{sl}_k)$, for any $k>1$. 
For motivation and more details we refer to their paper~\cite{KL}. 
The 2-category $\Ucat$ has the structure of an additive $\F_2$-linear
2-category. Thus between
any two objects there is an $\F_2$-linear Hom category, and 
composition and identities are given by additive $\F_2$-linear functors. 
%
%

From now on let $k\in\N_{>1}$ be arbitrary but fixed. 
In the sequel we use {\em signed sequences} $\ii=(\pm i_1,\ldots,\pm i_m)$, 
for any $m\in\N$ and any $i_j\in\{1,\ldots,k-1\}$. The set of signed sequences 
we denote $\sseq$. For $\ii=(\epsilon_1i_1,\ldots,\epsilon_mi_m)\in\sseq$ we 
define $\ii_X:=\epsilon_1 (i_1)_X+\cdots+\epsilon_m (i_m)_X$, where 
$$(i_j)_X=(0,0,\ldots,-1,2,-1,0\ldots,0),$$
such that the vector starts with $i_j-1$ and ends with $k-2-i_j$ zeros. We also 
define the symmetric $\Z$-valued bilinear form on $\{1,\ldots, k-1\}$ 
by $i\cdot i=2$, $i\cdot (i+1)=-1$ and $i\cdot j=0$, for $\vert i-j\vert>1$.

\begin{defn} \label{def_Ucat} $\Ucat$ is an
additive $\F_2$-linear 2-category. The 2-category $\Ucat$ consists of
\begin{itemize}
  \item objects: $\lambda=(\lambda_1,\ldots,\lambda_{k-1})$, with 
$\lambda_i\in\Z$.
\end{itemize}
The homs $\Ucat(\lambda,\lambda')$ between two objects $\lambda$, $\lambda'$ are
additive $\F_2$-linear categories consisting of:
\begin{itemize}
  \item objects\footnote{We refer to objects of the category
$\Ucat(\lambda,\lambda')$ as 1-morphisms of $\Ucat$.  Likewise, the morphisms of
$\Ucat(\lambda,\lambda')$ are called 2-morphisms in $\Ucat$. } of
$\Ucat(\lambda,\lambda')$: a 1-morphism in $\Ucat$ from $\lambda$ to $\lambda'$
is a formal finite direct sum of 1-morphisms
  \[
 \cal{E}_{\ii} \onel\{t\} =\onelp \cal{E}_{\ii} \onel\{t\}
  \]
for any $t\in \Z$ and signed sequence $\ii\in\sseq$ such that 
$\lambda'=\lambda+\ii_X$.
  \item morphisms of $\Ucat(\lambda,\lambda')$: for 1-morphisms $\cal{E}_{\ii} \onel\{t\}
  ,\cal{E}_{\jj} \onel\{t'\} \in \Ucat$, hom
sets $\Ucat(\cal{E}_{\ii} \onel\{t\},\cal{E}_{\jj} \onel\{t'\})$ of
$\Ucat(\lambda,\lambda')$ are graded $\F_2$-vector spaces given by linear
combinations of degree $t-t'$ diagrams, modulo certain relations, built from
composites of:
\begin{enumerate}[i)]
  \item  Degree zero identity 2-morphisms $1_x$ for each 1-morphism $x$ in
$\Ucat$; the identity 2-morphisms $1_{\cal{E}_{+i} \onel}\{t\}$ and
$1_{\cal{E}_{-i} \onel}\{t\}$, for $i \in I$, are represented graphically by
\[
\begin{array}{ccc}
  1_{\cal{E}_{+i} \onel\{t\}} &\quad  & 1_{\cal{E}_{-i} \onel\{t\}} \\ \\
    \xy
 (0,8);(0,-8); **\dir{-} ?(.5)*\dir{>}+(2.3,0)*{\scriptstyle{}};
 (0,-11)*{ i};(0,11)*{ i};
 (6,2)*{ \lambda};
 (-8,2)*{ \lambda +i_X};
 (-10,0)*{};(10,0)*{};
 \endxy
 & &
 \;\;   \xy
 (0,8);(0,-8); **\dir{-} ?(.5)*\dir{<}+(2.3,0)*{\scriptstyle{}};
 (0,-11)*{ i};(0,11)*{i};
 (6,2)*{ \lambda};
 (-8,2)*{ \lambda -i_X};
 (-12,0)*{};(12,0)*{};
 \endxy
\\ \\
   \;\;\text{ {\rm deg} 0}\;\;
 & &\;\;\text{ {\rm deg} 0}\;\;
\end{array}
\]
and more generally, for a signed sequence $\ii=\epsilon_1i_1 \epsilon_2i_2 \dots
\epsilon_mi_m$, the identity $1_{\cal{E}_{\ii} \onel\{t\}}$ 2-morphism is
represented as
\begin{equation*}
\begin{array}{ccc}
  \xy
 (-12,8);(-12,-8); **\dir{-};
 (-4,8);(-4,-8); **\dir{-};
 (4,0)*{\cdots};
 (12,8);(12,-8); **\dir{-};
 (-12,11)*{i_1}; (-4,11)*{ i_2};(12,11)*{ i_m };
  (-12,-11)*{ i_1}; (-4,-11)*{ i_2};(12,-11)*{ i_m};
 (18,2)*{ \lambda}; (-20,2)*{ \lambda+\ii_X};
 \endxy
\end{array}
\end{equation*}
where the strand labelled $i_{\alpha}$ is oriented up if $\epsilon_{\alpha}=+$
and oriented down if $\epsilon_{\alpha}=-$. We will often place labels with no
sign on the side of a strand and omit the labels at the top and bottom.  The
signs can be recovered from the orientations on the strands.

\item For each $\lambda \in X$ the 2-morphisms
\[
\begin{tabular}{|l|c|c|c|c|}
\hline
 {\bf Notation:} \xy (0,-5)*{};(0,7)*{}; \endxy&
 $\Uup_{i,\lambda}$  &  $\Udown_{i,\lambda}$  &$\Ucross_{i,j,\lambda}$
 &$\Ucrossd_{i,j,\lambda}$  \\
 \hline
 {\bf 2-morphism:} &   \xy
 (0,7);(0,-7); **\dir{-} ?(.75)*\dir{>}+(2.3,0)*{\scriptstyle{}}
 ?(.1)*\dir{ }+(2,0)*{\scs i};
 (0,-2)*{\txt\large{$\bullet$}};
 (6,4)*{ \lambda};
 (-8,4)*{ \lambda +i_X};
 (-10,0)*{};(10,0)*{};
 \endxy
 &
     \xy
 (0,7);(0,-7); **\dir{-} ?(.75)*\dir{<}+(2.3,0)*{\scriptstyle{}}
 ?(.1)*\dir{ }+(2,0)*{\scs i};
 (0,-2)*{\txt\large{$\bullet$}};
 (-6,4)*{ \lambda};
 (8,4)*{ \lambda +i_X};
 (-10,0)*{};(10,9)*{};
 \endxy
 &
   \xy
  (0,0)*{\xybox{
    (-4,-4)*{};(4,4)*{} **\crv{(-4,-1) & (4,1)}?(1)*\dir{>} ;
    (4,-4)*{};(-4,4)*{} **\crv{(4,-1) & (-4,1)}?(1)*\dir{>};
    (-5,-3)*{\scs i};
     (5.1,-3)*{\scs j};
     (8,1)*{ \lambda};
     (-12,0)*{};(12,0)*{};
     }};
  \endxy
 &
   \xy
  (0,0)*{\xybox{
    (-4,4)*{};(4,-4)*{} **\crv{(-4,1) & (4,-1)}?(1)*\dir{>} ;
    (4,4)*{};(-4,-4)*{} **\crv{(4,1) & (-4,-1)}?(1)*\dir{>};
    (-6,-3)*{\scs i};
     (6,-3)*{\scs j};
     (8,1)*{ \lambda};
     (-12,0)*{};(12,0)*{};
     }};
  \endxy
\\ & & & &\\
\hline
 {\bf Degree:} & \;\;\text{  $i\cdot i$ }\;\;
 &\;\;\text{  $i\cdot i$}\;\;& \;\;\text{  $-i \cdot j$}\;\;
 & \;\;\text{  $-i \cdot j$}\;\; \\
 \hline
\end{tabular}
\]

\[
\begin{tabular}{|l|c|c|c|c|}
\hline
  {\bf Notation:} \xy (0,-5)*{};(0,7)*{}; \endxy&\text{$\Ucupr_{i,\lambda}$} & \text{$\Ucupl_{i,\lambda}$} & \text{$\Ucapl_{i,\lambda}$} &
 \text{$\Ucapr_{i,\lambda}$} \\
 \hline
  {\bf 2-morphism:} &  \xy
    (0,-3)*{\bbpef{i}};
    (8,-5)*{ \lambda};
    (-12,0)*{};(12,0)*{};
    \endxy
  & \xy
    (0,-3)*{\bbpfe{i}};
    (8,-5)*{ \lambda};
    (-12,0)*{};(12,0)*{};
    \endxy
  & \xy
    (0,0)*{\bbcef{i}};
    (8,5)*{ \lambda};
    (-12,0)*{};(12,0)*{};
    \endxy
  & \xy
    (0,0)*{\bbcfe{i}};
    (8,5)*{ \lambda};
    (-12,0)*{};(12,0)*{};
    \endxy\\& & &  &\\ \hline
 {\bf Degree:} & \;\;\text{  $1+\lambda_i$}\;\;
 & \;\;\text{ $1-\lambda_i$}\;\;
 & \;\;\text{ $1+\lambda_i$}\;\;
 & \;\;\text{  $1-\lambda_i$}\;\;
 \\
 \hline
\end{tabular}
\]
\end{enumerate}

\item The $\mathfrak{sl}_2$ relations:
\begin{enumerate}[i)]
\item  $\mathbf{1}_{\lambda+i_X}\cal{E}_{+i}\onel$ and
$\onel\cal{E}_{-i}\mathbf{1}_{\lambda+i_X}$ are biadjoint, up to grading shifts:
\begin{equation} \label{eq_biadjoint1}
  \xy   0;/r.18pc/:
    (-8,0)*{}="1";
    (0,0)*{}="2";
    (8,0)*{}="3";
    (-8,-10);"1" **\dir{-};
    "1";"2" **\crv{(-8,8) & (0,8)} ?(0)*\dir{>} ?(1)*\dir{>};
    "2";"3" **\crv{(0,-8) & (8,-8)}?(1)*\dir{>};
    "3"; (8,10) **\dir{-};
    (12,-9)*{\lambda};
    (-6,9)*{\lambda+i_X};
    \endxy
    \; =
    \;
\xy   0;/r.18pc/:
    (-8,0)*{}="1";
    (0,0)*{}="2";
    (8,0)*{}="3";
    (0,-10);(0,10)**\dir{-} ?(.5)*\dir{>};
    (5,8)*{\lambda};
    (-9,8)*{\lambda+i_X};
    \endxy
\qquad \quad  \xy   0;/r.18pc/:
    (-8,0)*{}="1";
    (0,0)*{}="2";
    (8,0)*{}="3";
    (-8,-10);"1" **\dir{-};
    "1";"2" **\crv{(-8,8) & (0,8)} ?(0)*\dir{<} ?(1)*\dir{<};
    "2";"3" **\crv{(0,-8) & (8,-8)}?(1)*\dir{<};
    "3"; (8,10) **\dir{-};
    (12,-9)*{\lambda+i_X};
    (-6,9)*{ \lambda};
    \endxy
    \; =
    \;
\xy   0;/r.18pc/:
    (-8,0)*{}="1";
    (0,0)*{}="2";
    (8,0)*{}="3";
    (0,-10);(0,10)**\dir{-} ?(.5)*\dir{<};
   (9,8)*{\lambda+i_X};
    (-6,8)*{ \lambda};
    \endxy
\end{equation}

\begin{equation}
\label{eq_biadjoint2}
 \xy   0;/r.18pc/:
    (8,0)*{}="1";
    (0,0)*{}="2";
    (-8,0)*{}="3";
    (8,-10);"1" **\dir{-};
    "1";"2" **\crv{(8,8) & (0,8)} ?(0)*\dir{>} ?(1)*\dir{>};
    "2";"3" **\crv{(0,-8) & (-8,-8)}?(1)*\dir{>};
    "3"; (-8,10) **\dir{-};
    (12,9)*{\lambda};
    (-5,-9)*{\lambda+i_X};
    \endxy
    \; =
    \;
      \xy 0;/r.18pc/:
    (8,0)*{}="1";
    (0,0)*{}="2";
    (-8,0)*{}="3";
    (0,-10);(0,10)**\dir{-} ?(.5)*\dir{>};
    (5,-8)*{\lambda};
    (-9,-8)*{\lambda+i_X};
    \endxy
\qquad \quad \xy  0;/r.18pc/:
    (8,0)*{}="1";
    (0,0)*{}="2";
    (-8,0)*{}="3";
    (8,-10);"1" **\dir{-};
    "1";"2" **\crv{(8,8) & (0,8)} ?(0)*\dir{<} ?(1)*\dir{<};
    "2";"3" **\crv{(0,-8) & (-8,-8)}?(1)*\dir{<};
    "3"; (-8,10) **\dir{-};
    (12,9)*{\lambda+i_X};
    (-6,-9)*{ \lambda};
    \endxy
    \; =
    \;
\xy  0;/r.18pc/:
    (8,0)*{}="1";
    (0,0)*{}="2";
    (-8,0)*{}="3";
    (0,-10);(0,10)**\dir{-} ?(.5)*\dir{<};
    (9,-8)*{\lambda+i_X};
    (-6,-8)*{ \lambda};
    \endxy
\end{equation}
\item
\begin{equation} \label{eq_cyclic_dot}
    \xy
    (-8,5)*{}="1";
    (0,5)*{}="2";
    (0,-5)*{}="2'";
    (8,-5)*{}="3";
    (-8,-10);"1" **\dir{-};
    "2";"2'" **\dir{-} ?(.5)*\dir{<};
    "1";"2" **\crv{(-8,12) & (0,12)} ?(0)*\dir{<};
    "2'";"3" **\crv{(0,-12) & (8,-12)}?(1)*\dir{<};
    "3"; (8,10) **\dir{-};
    (15,-9)*{ \lambda+i_X};
    (-12,9)*{\lambda};
    (0,4)*{\txt\large{$\bullet$}};
    (10,8)*{\scs };
    (-10,-8)*{\scs i};
    \endxy
    \quad = \quad
      \xy
 (0,10);(0,-10); **\dir{-} ?(.75)*\dir{<}+(2.3,0)*{\scriptstyle{}}
 ?(.1)*\dir{ }+(2,0)*{\scs };
 (0,0)*{\txt\large{$\bullet$}};
 (-6,5)*{ \lambda};
 (8,5)*{ \lambda +i_X};
 (-10,0)*{};(10,0)*{};(-2,-8)*{\scs i};
 \endxy
    \quad = \quad
    \xy
    (8,5)*{}="1";
    (0,5)*{}="2";
    (0,-5)*{}="2'";
    (-8,-5)*{}="3";
    (8,-10);"1" **\dir{-};
    "2";"2'" **\dir{-} ?(.5)*\dir{<};
    "1";"2" **\crv{(8,12) & (0,12)} ?(0)*\dir{<};
    "2'";"3" **\crv{(0,-12) & (-8,-12)}?(1)*\dir{<};
    "3"; (-8,10) **\dir{-};
    (15,9)*{\lambda+i_X};
    (-12,-9)*{\lambda};
    (0,4)*{\txt\large{$\bullet$}};
    (-10,8)*{\scs };
    (10,-8)*{\scs i};
    \endxy
\end{equation}
\item  All dotted bubbles of negative degree are zero. That is,
\begin{equation} \label{eq_positivity_bubbles}
 \xy
 (-12,0)*{\cbub{\alpha}{i}};
 (-8,8)*{\lambda};
 \endxy
  = 0
 \qquad
  \text{if $\alpha<\lambda_i-1$} \qquad
 \xy
 (-12,0)*{\ccbub{\alpha}{i}};
 (-8,8)*{\lambda};
 \endxy = 0\quad
  \text{if $\alpha< -\lambda_i-1$}
\end{equation}
for all $\alpha \in \Z_+$, where a dot carrying a label $\alpha$ denotes the
$\alpha$-fold iterated vertical composite of $\Uup_{i,\lambda}$ or
$\Udown_{i,\lambda}$ depending on the orientation.  A dotted bubble of degree
zero equals 1:
\[
\xy 0;/r.18pc/:
 (0,0)*{\cbub{\lambda_i-1}{i}};
  (4,8)*{\lambda};
 \endxy
  = 1 \quad \text{for $\lambda_i \geq 1$,}
  \qquad \quad
  \xy 0;/r.18pc/:
 (0,0)*{\ccbub{-\lambda_i-1}{i}};
  (4,8)*{\lambda};
 \endxy
  = 1 \quad \text{for $\lambda_i \leq -1$.}
\]
\item For the following relations we employ the convention that all summations
are increasing, so that $\sum_{f=0}^{\alpha}$ is zero if $\alpha < 0$.
\begin{eqnarray}
\label{eq:redtobubbles}
  \text{$\xy 0;/r.18pc/:
  (14,8)*{\lambda};
  (-3,-8)*{};(3,8)*{} **\crv{(-3,-1) & (3,1)}?(1)*\dir{>};?(0)*\dir{>};
    (3,-8)*{};(-3,8)*{} **\crv{(3,-1) & (-3,1)}?(1)*\dir{>};
  (-3,-12)*{\bbsid};
  (-3,8)*{\bbsid};
  (3,8)*{}="t1";
  (9,8)*{}="t2";
  (3,-8)*{}="t1'";
  (9,-8)*{}="t2'";
   "t1";"t2" **\crv{(3,14) & (9, 14)};
   "t1'";"t2'" **\crv{(3,-14) & (9, -14)};
   "t2'";"t2" **\dir{-} ?(.5)*\dir{<};
   (9,0)*{}; (-6,-8)*{\scs i};
 \endxy$} \;\; = \;\; \sum_{f=0}^{-\lambda_i}
   \xy
  (19,4)*{\lambda};
  (0,0)*{\bbe{}};(-2,-8)*{\scs i};
  (12,-2)*{\cbub{\lambda_i-1+f}{i}};
  (0,6)*{\bullet}+(8,1)*{\scs -\lambda_i-f};
 \endxy
\qquad \qquad
  \text{$ \xy 0;/r.18pc/:
  (-12,8)*{\lambda};
   (-3,-8)*{};(3,8)*{} **\crv{(-3,-1) & (3,1)}?(1)*\dir{>};?(0)*\dir{>};
    (3,-8)*{};(-3,8)*{} **\crv{(3,-1) & (-3,1)}?(1)*\dir{>};
  (3,-12)*{\bbsid};
  (3,8)*{\bbsid}; (6,-8)*{\scs i};
  (-9,8)*{}="t1";
  (-3,8)*{}="t2";
  (-9,-8)*{}="t1'";
  (-3,-8)*{}="t2'";
   "t1";"t2" **\crv{(-9,14) & (-3, 14)};
   "t1'";"t2'" **\crv{(-9,-14) & (-3, -14)};
  "t1'";"t1" **\dir{-} ?(.5)*\dir{<};
 \endxy$} \;\; = \;\;
 \sum_{g=0}^{\lambda_i}
   \xy
  (-12,8)*{\lambda};
  (0,0)*{\bbe{}};(2,-8)*{\scs i};
  (-12,-2)*{\ccbub{-\lambda_i-1+g}{i}};
  (0,6)*{\bullet}+(8,-1)*{\scs \lambda_i-g};
 \endxy
\end{eqnarray}
\begin{eqnarray}
 \vcenter{\xy 0;/r.18pc/:
  (-8,0)*{};
  (8,0)*{};
  (-4,10)*{}="t1";
  (4,10)*{}="t2";
  (-4,-10)*{}="b1";
  (4,-10)*{}="b2";(-6,-8)*{\scs i};(6,-8)*{\scs i};
  "t1";"b1" **\dir{-} ?(.5)*\dir{<};
  "t2";"b2" **\dir{-} ?(.5)*\dir{>};
  (10,2)*{\lambda};
  (-10,2)*{\lambda};
  \endxy}
&\quad = \quad&
 \;\;
 \vcenter{   \xy 0;/r.18pc/:
    (-4,-4)*{};(4,4)*{} **\crv{(-4,-1) & (4,1)}?(1)*\dir{>};
    (4,-4)*{};(-4,4)*{} **\crv{(4,-1) & (-4,1)}?(1)*\dir{<};?(0)*\dir{<};
    (-4,4)*{};(4,12)*{} **\crv{(-4,7) & (4,9)};
    (4,4)*{};(-4,12)*{} **\crv{(4,7) & (-4,9)}?(1)*\dir{>};
  (8,8)*{\lambda};(-6,-3)*{\scs i};
     (6.5,-3)*{\scs i};
 \endxy}
  \quad + \quad
   \sum_{f=0}^{\lambda_i-1} \sum_{g=0}^{f}
    \vcenter{\xy 0;/r.18pc/:
    (-10,10)*{\lambda};
    (-8,0)*{};
  (8,0)*{};
  (-4,-15)*{}="b1";
  (4,-15)*{}="b2";
  "b2";"b1" **\crv{(5,-8) & (-5,-8)}; ?(.05)*\dir{<} ?(.93)*\dir{<}
  ?(.8)*\dir{}+(0,-.1)*{\bullet}+(-5,2)*{\scs f-g};
  (-4,15)*{}="t1";
  (4,15)*{}="t2";
  "t2";"t1" **\crv{(5,8) & (-5,8)}; ?(.15)*\dir{>} ?(.95)*\dir{>}
  ?(.4)*\dir{}+(0,-.2)*{\bullet}+(3,-2)*{\scs \;\;\; \lambda_i-1-f};
  (0,0)*{\ccbub{\scs \quad\;\;\;-\lambda_i-1+g}{i}};
  \endxy} \nn
 \\  \; \nn \\
 \vcenter{\xy 0;/r.18pc/:
  (-8,0)*{};(-6,-8)*{\scs i};(6,-8)*{\scs i};
  (8,0)*{};
  (-4,10)*{}="t1";
  (4,10)*{}="t2";
  (-4,-10)*{}="b1";
  (4,-10)*{}="b2";
  "t1";"b1" **\dir{-} ?(.5)*\dir{>};
  "t2";"b2" **\dir{-} ?(.5)*\dir{<};
  (10,2)*{\lambda};
  (-10,2)*{\lambda};
  \endxy}
&\quad = \quad&
 \;\;
   \vcenter{\xy 0;/r.18pc/:
    (-4,-4)*{};(4,4)*{} **\crv{(-4,-1) & (4,1)}?(1)*\dir{<};?(0)*\dir{<};
    (4,-4)*{};(-4,4)*{} **\crv{(4,-1) & (-4,1)}?(1)*\dir{>};
    (-4,4)*{};(4,12)*{} **\crv{(-4,7) & (4,9)}?(1)*\dir{>};
    (4,4)*{};(-4,12)*{} **\crv{(4,7) & (-4,9)};
  (8,8)*{\lambda};(-6,-3)*{\scs i};
     (6,-3)*{\scs i};
 \endxy}
  \quad + \quad
\sum_{f=0}^{-\lambda_i-1} \sum_{g=0}^{f}
    \vcenter{\xy 0;/r.18pc/:
    (-8,0)*{};
  (8,0)*{};
  (-4,-15)*{}="b1";
  (4,-15)*{}="b2";
  "b2";"b1" **\crv{(5,-8) & (-5,-8)}; ?(.1)*\dir{>} ?(.95)*\dir{>}
  ?(.8)*\dir{}+(0,-.1)*{\bullet}+(-5,2)*{\scs f-g};
  (-4,15)*{}="t1";
  (4,15)*{}="t2";
  "t2";"t1" **\crv{(5,8) & (-5,8)}; ?(.15)*\dir{<} ?(.97)*\dir{<}
  ?(.4)*\dir{}+(0,-.2)*{\bullet}+(3,-2)*{\scs \;\;-\lambda_i-1-f};
  (0,0)*{\cbub{\scs \quad\; \lambda_i-1+g}{i}};
  (-10,10)*{\lambda};
  \endxy} \label{eq_ident_decomp}
\end{eqnarray}
for all $\lambda\in X$.  Notice that for some values of $\lambda$ the dotted
bubbles appearing above have negative labels. A composite of $\Uup_{i,\lambda}$
or $\Udown_{i,\lambda}$ with itself a negative number of times does not make
sense. These dotted bubbles with negative labels, called {\em fake bubbles}, are
formal symbols inductively defined by the equation
\begin{center}
\begin{eqnarray}
 \makebox[0pt]{ $
\left( \xy 0;/r.15pc/:
 (0,0)*{\ccbub{-\lambda_i-1}{i}};
  (4,8)*{\lambda};
 \endxy
 +
 \xy 0;/r.15pc/:
 (0,0)*{\ccbub{-\lambda_i-1+1}{i}};
  (4,8)*{\lambda};
 \endxy t
 + \cdots +
 \xy 0;/r.15pc/:
 (0,0)*{\ccbub{-\lambda_i-1+\alpha}{i}};
  (4,8)*{\lambda};
 \endxy t^{\alpha}
 + \cdots
\right)
%
\left( \xy 0;/r.15pc/:
 (0,0)*{\cbub{\lambda_i-1}{i}};
  (4,8)*{\lambda};
 \endxy
 + \cdots +
 \xy 0;/r.15pc/:
 (0,0)*{\cbub{\lambda_i-1+\alpha}{i}};
 (4,8)*{\lambda};
 \endxy t^{\alpha}
 + \cdots
\right) =1 .$ } \nn \\ \label{eq_infinite_Grass}
\end{eqnarray}
\end{center}
and the additional condition
\[
\xy 0;/r.18pc/:
 (0,0)*{\cbub{-1}{i}};
  (4,8)*{\lambda};
 \endxy
 \quad = \quad
  \xy 0;/r.18pc/:
 (0,0)*{\ccbub{-1}{i}};
  (4,8)*{\lambda};
 \endxy
  \quad = \quad 1 \quad \text{if $\lambda_i =0$.}
\]
Although the labels are negative for fake bubbles, one can check that the overall
degree of each fake bubble is still positive, so that these fake bubbles do not
violate the positivity of dotted bubble axiom. The above equation, called the
infinite Grassmannian relation, remains valid even in high degree when most of
the bubbles involved are not fake bubbles.  See \cite{Lau1} for more details.
\item NilHecke relations:
 \begin{equation}
  \vcenter{\xy 0;/r.18pc/:
    (-4,-4)*{};(4,4)*{} **\crv{(-4,-1) & (4,1)}?(1)*\dir{>};
    (4,-4)*{};(-4,4)*{} **\crv{(4,-1) & (-4,1)}?(1)*\dir{>};
    (-4,4)*{};(4,12)*{} **\crv{(-4,7) & (4,9)}?(1)*\dir{>};
    (4,4)*{};(-4,12)*{} **\crv{(4,7) & (-4,9)}?(1)*\dir{>};
  (8,8)*{\lambda};(-5,-3)*{\scs i};
     (5.1,-3)*{\scs i};
 \endxy}
 =0, \qquad \quad
 \vcenter{
 \xy 0;/r.18pc/:
    (-4,-4)*{};(4,4)*{} **\crv{(-4,-1) & (4,1)}?(1)*\dir{>};
    (4,-4)*{};(-4,4)*{} **\crv{(4,-1) & (-4,1)}?(1)*\dir{>};
    (4,4)*{};(12,12)*{} **\crv{(4,7) & (12,9)}?(1)*\dir{>};
    (12,4)*{};(4,12)*{} **\crv{(12,7) & (4,9)}?(1)*\dir{>};
    (-4,12)*{};(4,20)*{} **\crv{(-4,15) & (4,17)}?(1)*\dir{>};
    (4,12)*{};(-4,20)*{} **\crv{(4,15) & (-4,17)}?(1)*\dir{>};
    (-4,4)*{}; (-4,12) **\dir{-};
    (12,-4)*{}; (12,4) **\dir{-};
    (12,12)*{}; (12,20) **\dir{-}; (-5.5,-3)*{\scs i};
     (5.5,-3)*{\scs i};(14,-3)*{\scs i};
  (18,8)*{\lambda};
\endxy}
 \;\; =\;\;
 \vcenter{
 \xy 0;/r.18pc/:
    (4,-4)*{};(-4,4)*{} **\crv{(4,-1) & (-4,1)}?(1)*\dir{>};
    (-4,-4)*{};(4,4)*{} **\crv{(-4,-1) & (4,1)}?(1)*\dir{>};
    (-4,4)*{};(-12,12)*{} **\crv{(-4,7) & (-12,9)}?(1)*\dir{>};
    (-12,4)*{};(-4,12)*{} **\crv{(-12,7) & (-4,9)}?(1)*\dir{>};
    (4,12)*{};(-4,20)*{} **\crv{(4,15) & (-4,17)}?(1)*\dir{>};
    (-4,12)*{};(4,20)*{} **\crv{(-4,15) & (4,17)}?(1)*\dir{>};
    (4,4)*{}; (4,12) **\dir{-};
    (-12,-4)*{}; (-12,4) **\dir{-};
    (-12,12)*{}; (-12,20) **\dir{-};(-5.5,-3)*{\scs i};
     (5.5,-3)*{\scs i};(-14,-3)*{\scs i};
  (10,8)*{\lambda};
\endxy} \label{eq_nil_rels}
  \end{equation}
\begin{eqnarray}
  \xy
  (4,4);(4,-4) **\dir{-}?(0)*\dir{<}+(2.3,0)*{};
  (-4,4);(-4,-4) **\dir{-}?(0)*\dir{<}+(2.3,0)*{};
  (9,2)*{\lambda};     (-5,-3)*{\scs i};
     (5.1,-3)*{\scs i};
 \endxy
 \quad =
\xy
  (0,0)*{\xybox{
    (-4,-4)*{};(4,4)*{} **\crv{(-4,-1) & (4,1)}?(1)*\dir{>}?(.25)*{\bullet};
    (4,-4)*{};(-4,4)*{} **\crv{(4,-1) & (-4,1)}?(1)*\dir{>};
    (-5,-3)*{\scs i};
     (5.1,-3)*{\scs i};
     (8,1)*{ \lambda};
     (-10,0)*{};(10,0)*{};
     }};
  \endxy
 \;\; +
 \xy
  (0,0)*{\xybox{
    (-4,-4)*{};(4,4)*{} **\crv{(-4,-1) & (4,1)}?(1)*\dir{>}?(.75)*{\bullet};
    (4,-4)*{};(-4,4)*{} **\crv{(4,-1) & (-4,1)}?(1)*\dir{>};
    (-5,-3)*{\scs i};
     (5.1,-3)*{\scs i};
     (8,1)*{ \lambda};
     (-10,0)*{};(10,0)*{};
     }};
  \endxy
 \;\; =
\xy
  (0,0)*{\xybox{
    (-4,-4)*{};(4,4)*{} **\crv{(-4,-1) & (4,1)}?(1)*\dir{>};
    (4,-4)*{};(-4,4)*{} **\crv{(4,-1) & (-4,1)}?(1)*\dir{>}?(.75)*{\bullet};
    (-5,-3)*{\scs i};
     (5.1,-3)*{\scs i};
     (8,1)*{ \lambda};
     (-10,0)*{};(10,0)*{};
     }};
  \endxy
 \;\; +
  \xy
  (0,0)*{\xybox{
    (-4,-4)*{};(4,4)*{} **\crv{(-4,-1) & (4,1)}?(1)*\dir{>} ;
    (4,-4)*{};(-4,4)*{} **\crv{(4,-1) & (-4,1)}?(1)*\dir{>}?(.25)*{\bullet};
    (-5,-3)*{\scs i};
     (5.1,-3)*{\scs i};
     (8,1)*{ \lambda};
     (-10,0)*{};(10,0)*{};
     }};
  \endxy \nn \\ \label{eq_nil_dotslide}
\end{eqnarray}
We will also include \eqref{eq_cyclic_cross-gen} for $i =j$ as an
$\mf{sl}_2$-relation.
\end{enumerate}

  \item All 2-morphisms are cyclic\footnote{See \cite{Lau1} and the references therein for
  the definition of a cyclic 2-morphism with respect to a biadjoint structure.} with respect to the above biadjoint
   structure.  This is ensured by the relations \eqref{eq_cyclic_dot}, and the
   relations
\begin{equation} \label{eq_cyclic_cross-gen}
\xy 0;/r.19pc/:
  (0,0)*{\xybox{
    (-4,-4)*{};(4,4)*{} **\crv{(-4,-1) & (4,1)}?(1)*\dir{>};
    (4,-4)*{};(-4,4)*{} **\crv{(4,-1) & (-4,1)};
     (4,4)*{};(-18,4)*{} **\crv{(4,16) & (-18,16)} ?(1)*\dir{>};
     (-4,-4)*{};(18,-4)*{} **\crv{(-4,-16) & (18,-16)} ?(1)*\dir{<}?(0)*\dir{<};
     (18,-4);(18,12) **\dir{-};(12,-4);(12,12) **\dir{-};
     (-18,4);(-18,-12) **\dir{-};(-12,4);(-12,-12) **\dir{-};
     (8,1)*{ \lambda};
     (-10,0)*{};(10,0)*{};
      (4,-4)*{};(12,-4)*{} **\crv{(4,-10) & (12,-10)}?(1)*\dir{<}?(0)*\dir{<};
      (-4,4)*{};(-12,4)*{} **\crv{(-4,10) & (-12,10)}?(1)*\dir{>}?(0)*\dir{>};
      (20,11)*{\scs j};(10,11)*{\scs i};
      (-20,-11)*{\scs j};(-10,-11)*{\scs i};
     }};
  \endxy
\quad =  \quad \xy
  (0,0)*{\xybox{
    (-4,-4)*{};(4,4)*{} **\crv{(-4,-1) & (4,1)}?(0)*\dir{<} ;
    (4,-4)*{};(-4,4)*{} **\crv{(4,-1) & (-4,1)}?(0)*\dir{<};
    (-5,3)*{\scs i};
     (5.1,3)*{\scs j};
     (-8,0)*{ \lambda};
     (-12,0)*{};(12,0)*{};
     }};
  \endxy \quad =  \quad
 \xy 0;/r.19pc/:
  (0,0)*{\xybox{
    (4,-4)*{};(-4,4)*{} **\crv{(4,-1) & (-4,1)}?(1)*\dir{>};
    (-4,-4)*{};(4,4)*{} **\crv{(-4,-1) & (4,1)};
     (-4,4)*{};(18,4)*{} **\crv{(-4,16) & (18,16)} ?(1)*\dir{>};
     (4,-4)*{};(-18,-4)*{} **\crv{(4,-16) & (-18,-16)} ?(1)*\dir{<}?(0)*\dir{<};
     (-18,-4);(-18,12) **\dir{-};(-12,-4);(-12,12) **\dir{-};
     (18,4);(18,-12) **\dir{-};(12,4);(12,-12) **\dir{-};
     (8,1)*{ \lambda};
     (-10,0)*{};(10,0)*{};
     (-4,-4)*{};(-12,-4)*{} **\crv{(-4,-10) & (-12,-10)}?(1)*\dir{<}?(0)*\dir{<};
      (4,4)*{};(12,4)*{} **\crv{(4,10) & (12,10)}?(1)*\dir{>}?(0)*\dir{>};
      (-20,11)*{\scs i};(-10,11)*{\scs j};
      (20,-11)*{\scs i};(10,-11)*{\scs j};
     }};
  \endxy
\end{equation}
The cyclic condition on 2-morphisms expressed by \eqref{eq_cyclic_dot} and
\eqref{eq_cyclic_cross-gen} ensures that diagrams related by isotopy represent
the same 2-morphism in $\Ucat$.

It will be convenient to introduce degree zero 2-morphisms:
\begin{equation} \label{eq_crossl-gen}
  \xy
  (0,0)*{\xybox{
    (-4,-4)*{};(4,4)*{} **\crv{(-4,-1) & (4,1)}?(1)*\dir{>} ;
    (4,-4)*{};(-4,4)*{} **\crv{(4,-1) & (-4,1)}?(0)*\dir{<};
    (-5,-3)*{\scs i};
     (-5,3)*{\scs j};
     (8,2)*{ \lambda};
     (-12,0)*{};(12,0)*{};
     }};
  \endxy
:=
 \xy 0;/r.19pc/:
  (0,0)*{\xybox{
    (4,-4)*{};(-4,4)*{} **\crv{(4,-1) & (-4,1)}?(1)*\dir{>};
    (-4,-4)*{};(4,4)*{} **\crv{(-4,-1) & (4,1)};
     (-4,4);(-4,12) **\dir{-};
     (-12,-4);(-12,12) **\dir{-};
     (4,-4);(4,-12) **\dir{-};(12,4);(12,-12) **\dir{-};
     (16,1)*{\lambda};
     (-10,0)*{};(10,0)*{};
     (-4,-4)*{};(-12,-4)*{} **\crv{(-4,-10) & (-12,-10)}?(1)*\dir{<}?(0)*\dir{<};
      (4,4)*{};(12,4)*{} **\crv{(4,10) & (12,10)}?(1)*\dir{>}?(0)*\dir{>};
      (-14,11)*{\scs j};(-2,11)*{\scs i};
      (14,-11)*{\scs j};(2,-11)*{\scs i};
     }};
  \endxy
  \quad = \quad
  \xy 0;/r.19pc/:
  (0,0)*{\xybox{
    (-4,-4)*{};(4,4)*{} **\crv{(-4,-1) & (4,1)}?(1)*\dir{<};
    (4,-4)*{};(-4,4)*{} **\crv{(4,-1) & (-4,1)};
     (4,4);(4,12) **\dir{-};
     (12,-4);(12,12) **\dir{-};
     (-4,-4);(-4,-12) **\dir{-};(-12,4);(-12,-12) **\dir{-};
     (16,1)*{\lambda};
     (10,0)*{};(-10,0)*{};
     (4,-4)*{};(12,-4)*{} **\crv{(4,-10) & (12,-10)}?(1)*\dir{>}?(0)*\dir{>};
      (-4,4)*{};(-12,4)*{} **\crv{(-4,10) & (-12,10)}?(1)*\dir{<}?(0)*\dir{<};
      (14,11)*{\scs i};(2,11)*{\scs j};
      (-14,-11)*{\scs i};(-2,-11)*{\scs j};
     }};
  \endxy
\end{equation}
\begin{equation} \label{eq_crossr-gen}
  \xy
  (0,0)*{\xybox{
    (-4,-4)*{};(4,4)*{} **\crv{(-4,-1) & (4,1)}?(0)*\dir{<} ;
    (4,-4)*{};(-4,4)*{} **\crv{(4,-1) & (-4,1)}?(1)*\dir{>};
    (5.1,-3)*{\scs i};
     (5.1,3)*{\scs j};
     (-8,2)*{ \lambda};
     (-12,0)*{};(12,0)*{};
     }};
  \endxy
:=
 \xy 0;/r.19pc/:
  (0,0)*{\xybox{
    (-4,-4)*{};(4,4)*{} **\crv{(-4,-1) & (4,1)}?(1)*\dir{>};
    (4,-4)*{};(-4,4)*{} **\crv{(4,-1) & (-4,1)};
     (4,4);(4,12) **\dir{-};
     (12,-4);(12,12) **\dir{-};
     (-4,-4);(-4,-12) **\dir{-};(-12,4);(-12,-12) **\dir{-};
     (-16,1)*{\lambda};
     (10,0)*{};(-10,0)*{};
     (4,-4)*{};(12,-4)*{} **\crv{(4,-10) & (12,-10)}?(1)*\dir{<}?(0)*\dir{<};
      (-4,4)*{};(-12,4)*{} **\crv{(-4,10) & (-12,10)}?(1)*\dir{>}?(0)*\dir{>};
      (14,11)*{\scs j};(2,11)*{\scs i};
      (-14,-11)*{\scs j};(-2,-11)*{\scs i};
     }};
  \endxy
  \quad = \quad
  \xy 0;/r.19pc/:
  (0,0)*{\xybox{
    (4,-4)*{};(-4,4)*{} **\crv{(4,-1) & (-4,1)}?(1)*\dir{<};
    (-4,-4)*{};(4,4)*{} **\crv{(-4,-1) & (4,1)};
     (-4,4);(-4,12) **\dir{-};
     (-12,-4);(-12,12) **\dir{-};
     (4,-4);(4,-12) **\dir{-};(12,4);(12,-12) **\dir{-};
     (-16,1)*{\lambda};
     (-10,0)*{};(10,0)*{};
     (-4,-4)*{};(-12,-4)*{} **\crv{(-4,-10) & (-12,-10)}?(1)*\dir{>}?(0)*\dir{>};
      (4,4)*{};(12,4)*{} **\crv{(4,10) & (12,10)}?(1)*\dir{<}?(0)*\dir{<};
      (-14,11)*{\scs i};(-2,11)*{\scs j};
      (14,-11)*{\scs i};(2,-11)*{\scs j};
     }};
  \endxy
\end{equation}
where the second equality in \eqref{eq_crossl-gen} and \eqref{eq_crossr-gen}
follow from \eqref{eq_cyclic_cross-gen}.

\item For $i \neq j$
\begin{equation} \label{eq_downup_ij-gen}
 \vcenter{   \xy 0;/r.18pc/:
    (-4,-4)*{};(4,4)*{} **\crv{(-4,-1) & (4,1)}?(1)*\dir{>};
    (4,-4)*{};(-4,4)*{} **\crv{(4,-1) & (-4,1)}?(1)*\dir{<};?(0)*\dir{<};
    (-4,4)*{};(4,12)*{} **\crv{(-4,7) & (4,9)};
    (4,4)*{};(-4,12)*{} **\crv{(4,7) & (-4,9)}?(1)*\dir{>};
  (8,8)*{\lambda};(-6,-3)*{\scs i};
     (6,-3)*{\scs j};
 \endxy}
 \;\;= \;\;
\xy 0;/r.18pc/:
  (3,9);(3,-9) **\dir{-}?(.55)*\dir{>}+(2.3,0)*{};
  (-3,9);(-3,-9) **\dir{-}?(.5)*\dir{<}+(2.3,0)*{};
  (8,2)*{\lambda};(-5,-6)*{\scs i};     (5.1,-6)*{\scs j};
 \endxy
 \qquad
    \vcenter{\xy 0;/r.18pc/:
    (-4,-4)*{};(4,4)*{} **\crv{(-4,-1) & (4,1)}?(1)*\dir{<};?(0)*\dir{<};
    (4,-4)*{};(-4,4)*{} **\crv{(4,-1) & (-4,1)}?(1)*\dir{>};
    (-4,4)*{};(4,12)*{} **\crv{(-4,7) & (4,9)}?(1)*\dir{>};
    (4,4)*{};(-4,12)*{} **\crv{(4,7) & (-4,9)};
  (8,8)*{\lambda};(-6,-3)*{\scs i};
     (6,-3)*{\scs j};
 \endxy}
 \;\;=\;\;
\xy 0;/r.18pc/:
  (3,9);(3,-9) **\dir{-}?(.5)*\dir{<}+(2.3,0)*{};
  (-3,9);(-3,-9) **\dir{-}?(.55)*\dir{>}+(2.3,0)*{};
  (8,2)*{\lambda};(-5,-6)*{\scs i};     (5.1,-6)*{\scs j};
 \endxy
\end{equation}

\item The $R(\nu)$-relations:
\begin{enumerate}[i)]
\item For $i \neq j$
\begin{eqnarray}
  \vcenter{\xy 0;/r.18pc/:
    (-4,-4)*{};(4,4)*{} **\crv{(-4,-1) & (4,1)}?(1)*\dir{>};
    (4,-4)*{};(-4,4)*{} **\crv{(4,-1) & (-4,1)}?(1)*\dir{>};
    (-4,4)*{};(4,12)*{} **\crv{(-4,7) & (4,9)}?(1)*\dir{>};
    (4,4)*{};(-4,12)*{} **\crv{(4,7) & (-4,9)}?(1)*\dir{>};
  (8,8)*{\lambda};(-5,-3)*{\scs i};
     (5.1,-3)*{\scs j};
 \endxy}
 \qquad = \qquad
 \left\{
 \begin{array}{ccc}
     \xy 0;/r.18pc/:
  (3,9);(3,-9) **\dir{-}?(.5)*\dir{<}+(2.3,0)*{};
  (-3,9);(-3,-9) **\dir{-}?(.5)*\dir{<}+(2.3,0)*{};
  (8,2)*{\lambda};(-5,-6)*{\scs i};     (5.1,-6)*{\scs j};
 \endxy &  &  \text{if $i \cdot j=0$,}\\ \\
  \vcenter{\xy 0;/r.18pc/:
  (3,9);(3,-9) **\dir{-}?(.5)*\dir{<}+(2.3,0)*{};
  (-3,9);(-3,-9) **\dir{-}?(.5)*\dir{<}+(2.3,0)*{};
  (8,2)*{\lambda}; (-3,4)*{\bullet};
  (-5,-6)*{\scs i};     (5.1,-6)*{\scs j};
 \endxy} \quad
 + \quad
 \vcenter{\xy 0;/r.18pc/:
  (3,9);(3,-9) **\dir{-}?(.5)*\dir{<}+(2.3,0)*{};
  (-3,9);(-3,-9) **\dir{-}?(.5)*\dir{<}+(2.3,0)*{};
  (12,2)*{\lambda}; (3,4)*{\bullet};
  (-5,-6)*{\scs i};     (5.1,-6)*{\scs j};
 \endxy}
   &  & \text{if $i \cdot j =-1$.}
 \end{array}
 \right. \nn \\\label{eq_r2_ij-gen}
\end{eqnarray}

\begin{eqnarray} \label{eq_dot_slide_ij-gen}
\xy
  (0,0)*{\xybox{
    (-4,-4)*{};(4,4)*{} **\crv{(-4,-1) & (4,1)}?(1)*\dir{>}?(.75)*{\bullet};
    (4,-4)*{};(-4,4)*{} **\crv{(4,-1) & (-4,1)}?(1)*\dir{>};
    (-5,-3)*{\scs i};
     (5.1,-3)*{\scs j};
     (8,1)*{ \lambda};
     (-10,0)*{};(10,0)*{};
     }};
  \endxy
 \;\; =
\xy
  (0,0)*{\xybox{
    (-4,-4)*{};(4,4)*{} **\crv{(-4,-1) & (4,1)}?(1)*\dir{>}?(.25)*{\bullet};
    (4,-4)*{};(-4,4)*{} **\crv{(4,-1) & (-4,1)}?(1)*\dir{>};
    (-5,-3)*{\scs i};
     (5.1,-3)*{\scs j};
     (8,1)*{ \lambda};
     (-10,0)*{};(10,0)*{};
     }};
  \endxy
\qquad  \xy
  (0,0)*{\xybox{
    (-4,-4)*{};(4,4)*{} **\crv{(-4,-1) & (4,1)}?(1)*\dir{>};
    (4,-4)*{};(-4,4)*{} **\crv{(4,-1) & (-4,1)}?(1)*\dir{>}?(.75)*{\bullet};
    (-5,-3)*{\scs i};
     (5.1,-3)*{\scs j};
     (8,1)*{ \lambda};
     (-10,0)*{};(10,0)*{};
     }};
  \endxy
\;\;  =
  \xy
  (0,0)*{\xybox{
    (-4,-4)*{};(4,4)*{} **\crv{(-4,-1) & (4,1)}?(1)*\dir{>} ;
    (4,-4)*{};(-4,4)*{} **\crv{(4,-1) & (-4,1)}?(1)*\dir{>}?(.25)*{\bullet};
    (-5,-3)*{\scs i};
     (5.1,-3)*{\scs j};
     (8,1)*{ \lambda};
     (-10,0)*{};(12,0)*{};
     }};
  \endxy
\end{eqnarray}

\item Unless $i = k$ and $i \cdot j=-1$
\begin{equation}
 \vcenter{
 \xy 0;/r.18pc/:
    (-4,-4)*{};(4,4)*{} **\crv{(-4,-1) & (4,1)}?(1)*\dir{>};
    (4,-4)*{};(-4,4)*{} **\crv{(4,-1) & (-4,1)}?(1)*\dir{>};
    (4,4)*{};(12,12)*{} **\crv{(4,7) & (12,9)}?(1)*\dir{>};
    (12,4)*{};(4,12)*{} **\crv{(12,7) & (4,9)}?(1)*\dir{>};
    (-4,12)*{};(4,20)*{} **\crv{(-4,15) & (4,17)}?(1)*\dir{>};
    (4,12)*{};(-4,20)*{} **\crv{(4,15) & (-4,17)}?(1)*\dir{>};
    (-4,4)*{}; (-4,12) **\dir{-};
    (12,-4)*{}; (12,4) **\dir{-};
    (12,12)*{}; (12,20) **\dir{-};
  (18,8)*{\lambda};
  (-6,-3)*{\scs i};
  (6,-3)*{\scs j};
  (15,-3)*{\scs k};
\endxy}
 \;\; =\;\;
 \vcenter{
 \xy 0;/r.18pc/:
    (4,-4)*{};(-4,4)*{} **\crv{(4,-1) & (-4,1)}?(1)*\dir{>};
    (-4,-4)*{};(4,4)*{} **\crv{(-4,-1) & (4,1)}?(1)*\dir{>};
    (-4,4)*{};(-12,12)*{} **\crv{(-4,7) & (-12,9)}?(1)*\dir{>};
    (-12,4)*{};(-4,12)*{} **\crv{(-12,7) & (-4,9)}?(1)*\dir{>};
    (4,12)*{};(-4,20)*{} **\crv{(4,15) & (-4,17)}?(1)*\dir{>};
    (-4,12)*{};(4,20)*{} **\crv{(-4,15) & (4,17)}?(1)*\dir{>};
    (4,4)*{}; (4,12) **\dir{-};
    (-12,-4)*{}; (-12,4) **\dir{-};
    (-12,12)*{}; (-12,20) **\dir{-};
  (10,8)*{\lambda};
  (7,-3)*{\scs k};
  (-6,-3)*{\scs j};
  (-14,-3)*{\scs i};
\endxy} \label{eq_r3_easy-gen}
\end{equation}

For $i \cdot j =-1$
\begin{equation}
 \vcenter{
 \xy 0;/r.18pc/:
    (-4,-4)*{};(4,4)*{} **\crv{(-4,-1) & (4,1)}?(1)*\dir{>};
    (4,-4)*{};(-4,4)*{} **\crv{(4,-1) & (-4,1)}?(1)*\dir{>};
    (4,4)*{};(12,12)*{} **\crv{(4,7) & (12,9)}?(1)*\dir{>};
    (12,4)*{};(4,12)*{} **\crv{(12,7) & (4,9)}?(1)*\dir{>};
    (-4,12)*{};(4,20)*{} **\crv{(-4,15) & (4,17)}?(1)*\dir{>};
    (4,12)*{};(-4,20)*{} **\crv{(4,15) & (-4,17)}?(1)*\dir{>};
    (-4,4)*{}; (-4,12) **\dir{-};
    (12,-4)*{}; (12,4) **\dir{-};
    (12,12)*{}; (12,20) **\dir{-};
  (18,8)*{\lambda};
  (-6,-3)*{\scs i};
  (6,-3)*{\scs j};
  (14,-3)*{\scs i};
\endxy}
\quad + \quad
 \vcenter{
 \xy 0;/r.18pc/:
    (4,-4)*{};(-4,4)*{} **\crv{(4,-1) & (-4,1)}?(1)*\dir{>};
    (-4,-4)*{};(4,4)*{} **\crv{(-4,-1) & (4,1)}?(1)*\dir{>};
    (-4,4)*{};(-12,12)*{} **\crv{(-4,7) & (-12,9)}?(1)*\dir{>};
    (-12,4)*{};(-4,12)*{} **\crv{(-12,7) & (-4,9)}?(1)*\dir{>};
    (4,12)*{};(-4,20)*{} **\crv{(4,15) & (-4,17)}?(1)*\dir{>};
    (-4,12)*{};(4,20)*{} **\crv{(-4,15) & (4,17)}?(1)*\dir{>};
    (4,4)*{}; (4,12) **\dir{-};
    (-12,-4)*{}; (-12,4) **\dir{-};
    (-12,12)*{}; (-12,20) **\dir{-};
  (10,8)*{\lambda};
  (6,-3)*{\scs i};
  (-6,-3)*{\scs j};
  (-14,-3)*{\scs i};
\endxy}
 \;\; =\;\;\;\;
\xy 0;/r.18pc/:
  (4,12);(4,-12) **\dir{-}?(.5)*\dir{<};
  (-4,12);(-4,-12) **\dir{-}?(.5)*\dir{<}?(.25)*\dir{}+(0,0)*{}+(-3,0)*{\scs };
  (12,12);(12,-12) **\dir{-}?(.5)*\dir{<}?(.25)*\dir{}+(0,0)*{}+(10,0)*{\scs};
  (22,-2)*{\lambda}; (-6,-9)*{\scs i};     (6.1,-9)*{\scs j};
  (14,-9)*{\scs i};
 \endxy
 \label{eq_r3_hard-gen}
\end{equation}
\end{enumerate}
For example, for any shift $t$ there are 2-morphisms
\begin{eqnarray}
  \xy
 (0,7);(0,-7); **\dir{-} ?(.75)*\dir{>}+(2.3,0)*{\scriptstyle{}};
 (0.1,-2)*{\txt\large{$\bullet$}};
 (6,4)*{ \lambda};
 (-10,0)*{};(10,0)*{};(0,-10)*{i };(0,10)*{i};
 \endxy
 \maps  \cal{E}_{+i}\onel\{t\} \To \cal{E}_{+i}\onel\{t-2\}\quad
\xy
  (0,0)*{\xybox{
    (-4,-6)*{};(4,6)*{} **\crv{(-4,-1) & (4,1)}?(1)*\dir{>} ;
    (4,-6)*{};(-4,6)*{} **\crv{(4,-1) & (-4,1)}?(1)*\dir{>};
    (-4,-9)*{ i};
     (4,-9)*{ j};
     (8,1)*{ \lambda};
     (-12,0)*{};(12,0)*{};
     }};
  \endxy
  \maps \cal{E}_{+i+j}\onel\{t\} \To \cal{E}_{+j+i}\onel\{t-i\cdot j\} \nn \\
  \xy
    (0,-3)*{\bbpef{i}};
    (8,-5)*{ \lambda};
    (-12,0)*{};(12,0)*{};
    \endxy \maps
    \onel\{t\} \To \cal{E}_{-i+i}\onel\{t-c_{+i,\lambda}\} \quad \xy
    (0,0)*{\bbcfe{i}};
    (8,5)*{ \lambda};
    (-12,0)*{};(12,0)*{};
    \endxy \maps
    \cal{E}_{-i+i}\onel\{t\} \To \onel\{t-c_{-i,\lambda}\} \nn
\end{eqnarray}
in $\Ucat$,  and the diagrammatic relation
\[
\vcenter{
 \xy 0;/r.18pc/:
    (-4,-4)*{};(4,4)*{} **\crv{(-4,-1) & (4,1)}?(1)*\dir{>};
    (4,-4)*{};(-4,4)*{} **\crv{(4,-1) & (-4,1)}?(1)*\dir{>};
    (4,4)*{};(12,12)*{} **\crv{(4,7) & (12,9)}?(1)*\dir{>};
    (12,4)*{};(4,12)*{} **\crv{(12,7) & (4,9)}?(1)*\dir{>};
    (-4,12)*{};(4,20)*{} **\crv{(-4,15) & (4,17)}?(1)*\dir{>};
    (4,12)*{};(-4,20)*{} **\crv{(4,15) & (-4,17)}?(1)*\dir{>};
    (-4,4)*{}; (-4,12) **\dir{-};
    (12,-4)*{}; (12,4) **\dir{-};
    (12,12)*{}; (12,20) **\dir{-}; (-5.5,-3)*{\scs i};
     (5.5,-3)*{\scs i};(14,-3)*{\scs i};
  (18,8)*{\lambda};
\endxy}
 \;\; =\;\;
 \vcenter{
 \xy 0;/r.18pc/:
    (4,-4)*{};(-4,4)*{} **\crv{(4,-1) & (-4,1)}?(1)*\dir{>};
    (-4,-4)*{};(4,4)*{} **\crv{(-4,-1) & (4,1)}?(1)*\dir{>};
    (-4,4)*{};(-12,12)*{} **\crv{(-4,7) & (-12,9)}?(1)*\dir{>};
    (-12,4)*{};(-4,12)*{} **\crv{(-12,7) & (-4,9)}?(1)*\dir{>};
    (4,12)*{};(-4,20)*{} **\crv{(4,15) & (-4,17)}?(1)*\dir{>};
    (-4,12)*{};(4,20)*{} **\crv{(-4,15) & (4,17)}?(1)*\dir{>};
    (4,4)*{}; (4,12) **\dir{-};
    (-12,-4)*{}; (-12,4) **\dir{-};
    (-12,12)*{}; (-12,20) **\dir{-};(-5.5,-3)*{\scs i};
     (5.5,-3)*{\scs i};(-14,-3)*{\scs i};
  (10,8)*{\lambda};
\endxy}
\]
gives rise to relations in
$\Ucat\big(\cal{E}_{iii}\onel\{t\},\cal{E}_{iii}\onel\{t+3i\cdot i\}\big)$ for
all $t\in \Z$.

\item the additive $\Z$-linear composition functor $\Ucat(\lambda,\lambda')
 \times \Ucat(\lambda',\lambda'') \to \Ucat(\lambda,\lambda'')$ is given on
 1-morphisms of $\Ucat$ by
\begin{equation}
  \cal{E}_{\jj}\mathbf{1}_{\lambda'}\{t'\} \times \cal{E}_{\ii}\onel\{t\} \mapsto
  \cal{E}_{\jj\ii}\onel\{t+t'\}
\end{equation}
for $\ii_X=\lambda-\lambda'$, and on 2-morphisms of $\Ucat$ by juxtaposition of
diagrams
\[
\left(\;\;\vcenter{\xy 0;/r.16pc/:
 (-4,-15)*{}; (-20,25) **\crv{(-3,-6) & (-20,4)}?(0)*\dir{<}?(.6)*\dir{}+(0,0)*{\bullet};
 (-12,-15)*{}; (-4,25) **\crv{(-12,-6) & (-4,0)}?(0)*\dir{<}?(.6)*\dir{}+(.2,0)*{\bullet};
 ?(0)*\dir{<}?(.75)*\dir{}+(.2,0)*{\bullet};?(0)*\dir{<}?(.9)*\dir{}+(0,0)*{\bullet};
 (-28,25)*{}; (-12,25) **\crv{(-28,10) & (-12,10)}?(0)*\dir{<};
  ?(.2)*\dir{}+(0,0)*{\bullet}?(.35)*\dir{}+(0,0)*{\bullet};
 (-36,-15)*{}; (-36,25) **\crv{(-34,-6) & (-35,4)}?(1)*\dir{>};
 (-28,-15)*{}; (-42,25) **\crv{(-28,-6) & (-42,4)}?(1)*\dir{>};
 (-42,-15)*{}; (-20,-15) **\crv{(-42,-5) & (-20,-5)}?(1)*\dir{>};
 (6,10)*{\cbub{}{}};
 (-23,0)*{\cbub{}{}};
 (8,-4)*{\lambda'};(-44,-4)*{\lambda''};
 \endxy}\;\;\right) \;\; \times \;\;
\left(\;\;\vcenter{ \xy 0;/r.18pc/: (-14,8)*{\xybox{
 (0,-10)*{}; (-16,10)*{} **\crv{(0,-6) & (-16,6)}?(.5)*\dir{};
 (-16,-10)*{}; (-8,10)*{} **\crv{(-16,-6) & (-8,6)}?(1)*\dir{}+(.1,0)*{\bullet};
  (-8,-10)*{}; (0,10)*{} **\crv{(-8,-6) & (-0,6)}?(.6)*\dir{}+(.2,0)*{\bullet}?
  (1)*\dir{}+(.1,0)*{\bullet};
  (0,10)*{}; (-16,30)*{} **\crv{(0,14) & (-16,26)}?(1)*\dir{>};
 (-16,10)*{}; (-8,30)*{} **\crv{(-16,14) & (-8,26)}?(1)*\dir{>};
  (-8,10)*{}; (0,30)*{} **\crv{(-8,14) & (-0,26)}?(1)*\dir{>}?(.6)*\dir{}+(.25,0)*{\bullet};
   }};
 (-2,-4)*{\lambda}; (-26,-4)*{\lambda'};
 \endxy} \;\;\right)
 \;\;\mapsto \;\;
\vcenter{\xy 0;/r.16pc/:
 (-4,-15)*{}; (-20,25) **\crv{(-3,-6) & (-20,4)}?(0)*\dir{<}?(.6)*\dir{}+(0,0)*{\bullet};
 (-12,-15)*{}; (-4,25) **\crv{(-12,-6) & (-4,0)}?(0)*\dir{<}?(.6)*\dir{}+(.2,0)*{\bullet};
 ?(0)*\dir{<}?(.75)*\dir{}+(.2,0)*{\bullet};?(0)*\dir{<}?(.9)*\dir{}+(0,0)*{\bullet};
 (-28,25)*{}; (-12,25) **\crv{(-28,10) & (-12,10)}?(0)*\dir{<};
  ?(.2)*\dir{}+(0,0)*{\bullet}?(.35)*\dir{}+(0,0)*{\bullet};
 (-36,-15)*{}; (-36,25) **\crv{(-34,-6) & (-35,4)}?(1)*\dir{>};
 (-28,-15)*{}; (-42,25) **\crv{(-28,-6) & (-42,4)}?(1)*\dir{>};
 (-42,-15)*{}; (-20,-15) **\crv{(-42,-5) & (-20,-5)}?(1)*\dir{>};
 (6,10)*{\cbub{}{}};
 (-23,0)*{\cbub{}{}};
 \endxy}
 \vcenter{ \xy 0;/r.16pc/: (-14,8)*{\xybox{
 (0,-10)*{}; (-16,10)*{} **\crv{(0,-6) & (-16,6)}?(.5)*\dir{};
 (-16,-10)*{}; (-8,10)*{} **\crv{(-16,-6) & (-8,6)}?(1)*\dir{}+(.1,0)*{\bullet};
  (-8,-10)*{}; (0,10)*{} **\crv{(-8,-6) & (-0,6)}?(.6)*\dir{}+(.2,0)*{\bullet}?
  (1)*\dir{}+(.1,0)*{\bullet};
  (0,10)*{}; (-16,30)*{} **\crv{(0,14) & (-16,26)}?(1)*\dir{>};
 (-16,10)*{}; (-8,30)*{} **\crv{(-16,14) & (-8,26)}?(1)*\dir{>};
  (-8,10)*{}; (0,30)*{} **\crv{(-8,14) & (-0,26)}?(1)*\dir{>}?(.6)*\dir{}+(.25,0)*{\bullet};
   }};
 (0,-5)*{\lambda};
 \endxy}
\]
\end{itemize}
\end{defn}

$\Ucat$ has graded 2-homs defined by
\begin{equation}
  \HOMU(x,y) := \bigoplus_{t \in \Z} \Hom_{\Ucat}(x\{t\},y).
\end{equation}


\section{The 2-category of universal $\mathfrak{sl}_3$-foams}
\label{sec:foams}

In this section we define a 2-category of foams related to the universal 
$\mathfrak{sl}_3$-foams defined in \cite{MV}. 
\subsection{The category $\mathbf{Foam}_{/\ell}$}
First let us recall the definition and some basic facts about the 
category of universal $\mathfrak{sl}_3$ foams $\mathbf{Foam}_{/\ell}$ from 
\cite{MV}. A \emph{closed web} is an oriented trivalent graph such that 
the edges meeting at a trivalent vertex are all oriented away from the vertex 
or are all oriented towards it. A \emph{foam} is a cobordism with singular 
arcs between two closed webs. A singular arc in a foam $f$ is the set of points of $f$ that have a neighborhood homeomorphic to the letter Y times an interval (see the examples in Figure~\ref{fig:ssaddle}).

\begin{figure}[h]
\center
{
\includegraphics[height=0.5in]{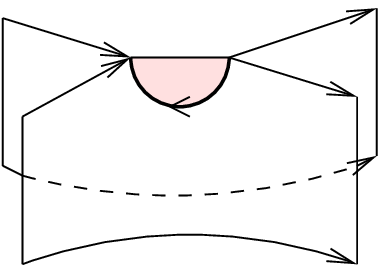}\qquad
\includegraphics[height=0.5in]{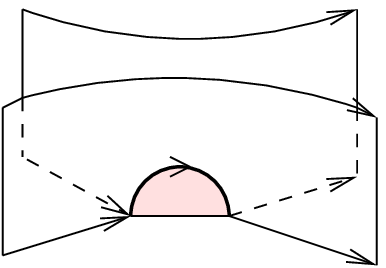}
}
\caption{The zip and the unzip}
\label{fig:ssaddle}
\end{figure}
\noindent Interpreted as morphisms, we read foams from bottom to top by convention, and the orientation of the singular arcs is by convention as in Figure~\ref{fig:ssaddle}. Foams can have dots that can move freely on the facet to which they belong, but are not allowed to cross singular arcs. Let $\F_2[a,b,c]$ be the ring of 
polynomials in $a,b,c$ with coefficients in $\F_2$. 

\begin{defn}
${\bf Foam}$ is the category whose objects are closed webs and whose morphisms 
are $\F_2[a,b,c]$-linear 
combinations of isotopy classes of foams.
\end{defn}

\noindent ${\bf Foam}$ is an additive category.

In order to construct the universal theory we divide $\mathbf{Foam}$ by the local relations  $\ell=(3D, CN, S, \Theta)$ below. 
$$\xymatrix@R=2mm
{
\raisebox{-5pt}{
\includegraphics[height=0.2in]{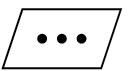}}=
a\raisebox{-5pt}{
\includegraphics[height=0.2in]{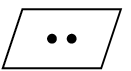}}+
b\raisebox{-5pt}{
\includegraphics[height=0.2in]{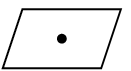}}+
c\raisebox{-5pt}{
\includegraphics[height=0.2in]{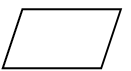}}
 & \text{(3D)}
\\
\raisebox{-17pt}{
\includegraphics[height=0.5in,width=0.2in]{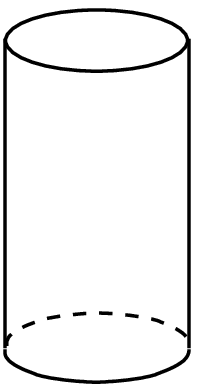}}=
\raisebox{-17pt}{
\includegraphics[height=0.5in,width=0.2in]{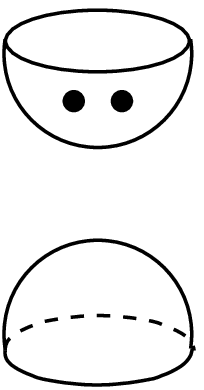}}+
\raisebox{-17pt}{
\includegraphics[height=0.5in,width=0.2in]{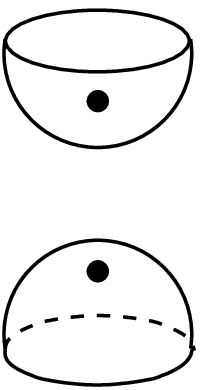}}+
\raisebox{-17pt}{
\includegraphics[height=0.5in,width=0.2in]{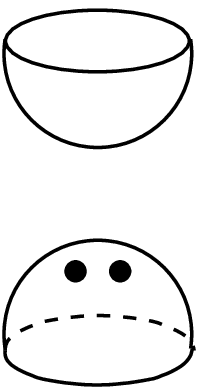}}
+a
\left( 
\raisebox{-17pt}{
\includegraphics[height=0.5in,width=0.2in]{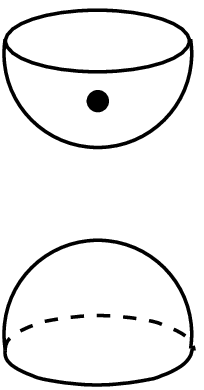}}+
 \raisebox{-17pt}{
\includegraphics[height=0.5in,width=0.2in]{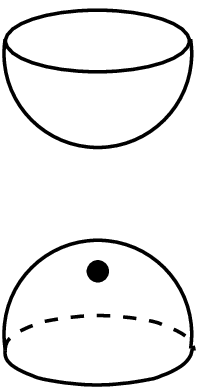}}
\right)
+b
\raisebox{-17pt}{
\includegraphics[height=0.5in,width=0.2in]{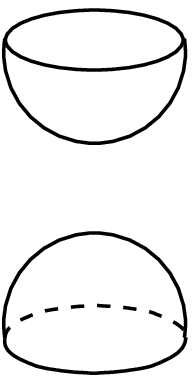}}
 & \text{(CN)} \\
\raisebox{-8pt}{\includegraphics[width=0.3in]{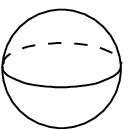}}=
\raisebox{-8pt}{\includegraphics[width=0.3in]{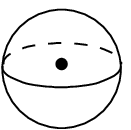}}=0,\quad
\raisebox{-8pt}{\includegraphics[width=0.3in]{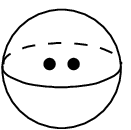}}=1
 & \text{(S)}
}$$

\begin{figure}[h]
\center
{
\includegraphics[height=0.4in]{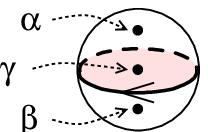}
}
\end{figure} 
Theta-foams are obtained by gluing three oriented disks along their boundaries (their orientations must coincide), as shown on the right. Note the orientation of the singular circle. Let $\alpha$, $\beta$, $\gamma$ denote the number of dots on each facet.
The ($\Theta$) \emph{relation} says that for $\alpha$, $\beta$ or $\gamma\leq 2$
\begin{equation*}
\theta(\alpha,\beta,\gamma)=\left\{
\begin{array}{cl}
1 & (\alpha,\beta,\gamma)=(1,2,0)\mbox{ or any permutation} \\ 
0 & \mbox{else}
\end{array}
\right.
\qquad(\Theta)
\end{equation*}
Reversing the orientation of the singular circle gives the same values for 
$\theta(\alpha,\beta,\gamma)$. 
Note that when we have three or more dots on a facet of a foam we can use the (\emph{3D}) relation to reduce to the case where it has less than three dots.

A closed foam $f$ can be viewed as a morphism from the empty web to itself which by the relations  (\emph{3D}, \emph{CN}, \emph{S}, $\Theta$) is an element of $\F_2[a,b,c]$. It can be checked that this set of relations is consistent and determines uniquely the evaluation of every closed foam 
$f$, denoted $C(f)$. Define a $q$-grading on $\F_2[a,b,c]$ as $q(1)=0$, $q(a)=2$, $q(b)=4$ and $q(c)=6$. We define the \emph{q-degree} of a foam $f$ with 
$d$ dots by $$q(f)=-2\chi(f)+ \chi(\partial f)+2d,$$ where $\chi$ denotes the 
Euler characteristic and $\partial f$ the boundary of $f$.

\begin{defn}
${\bf Foam}_{/\ell}$ is the quotient of the category ${\bf Foam}$ by the local relations $\ell$. 
For webs $\Gamma$, $\Gamma'$ and for families $f_i\in\Hom_{\mathbf{Foam}_{/\ell}}(\Gamma,\Gamma')$ and 
$c_i\in\F_2[a,b,c]$ we impose $\sum_ic_if_i=0$ if and only if 
$\sum_ic_iC(g'f_ig)=0$ holds, for all 
$g\in\Hom_{\mathbf{Foam}_{/\ell}}(\emptyset,\Gamma)$ and 
$g'\in\Hom_{\mathbf{Foam}_{/\ell}}(\Gamma',\emptyset)$.
\end{defn}

\begin{lem}\label{lem:identities}
We have the following relations in ${\bf Foam}_{/\ell}$:
$$\xymatrix@R=2mm{
\raisebox{-16pt}{\includegraphics[width=0.6in]{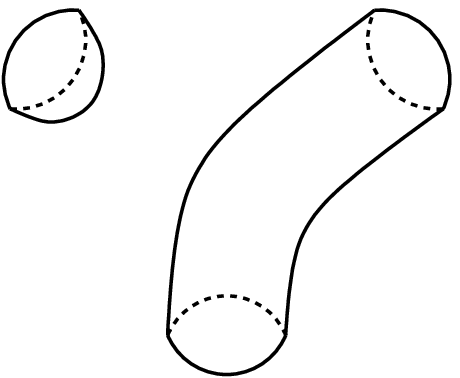}}+
\raisebox{-16pt}{\includegraphics[width=0.6in]{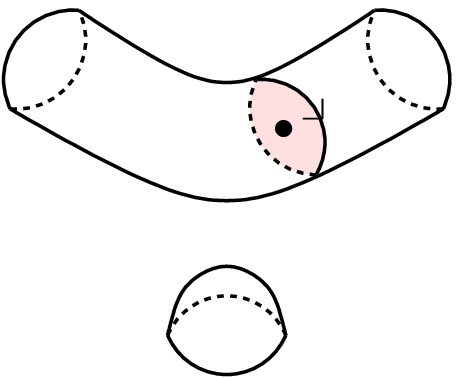}}=
\raisebox{-16pt}{\includegraphics[width=0.6in]{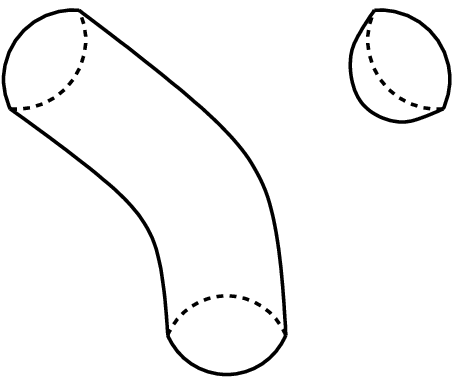}}+
\raisebox{-16pt}{\includegraphics[width=0.6in]{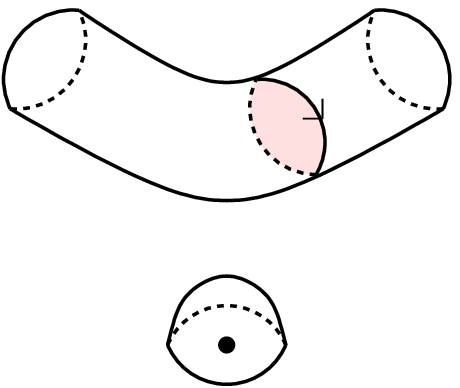}}
 & \text{(4C)}  \\
\label{R-rel}
\raisebox{-16pt}{\includegraphics[height=0.5in,width=0.2in]{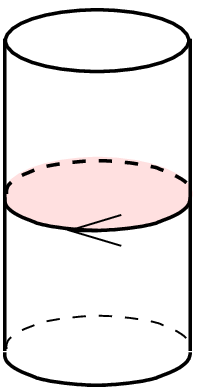}}=
\raisebox{-16pt}{\includegraphics[height=0.5in,width=0.2in]{cnecka1}}+
\raisebox{-16pt}{\includegraphics[height=0.5in,width=0.2in]{cnecka2}}
 & \text{(RD)}  \\
\raisebox{-20pt}{\includegraphics[height=0.6in]{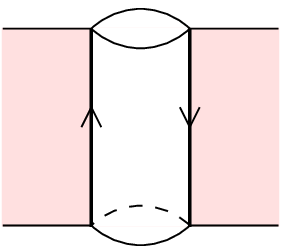}}=
\raisebox{-26pt}{\includegraphics[height=0.75in]{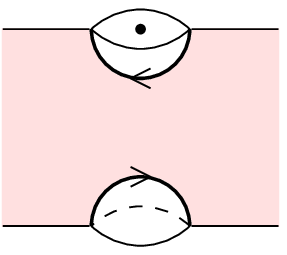}}+
\raisebox{-26pt}{\includegraphics[height=0.75in]{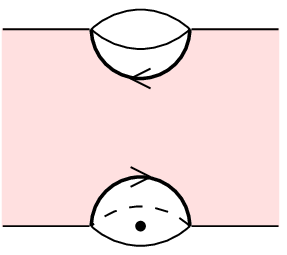}}
 & \text{(DR)}  \\
\raisebox{-28pt}{\includegraphics[height=0.8in]{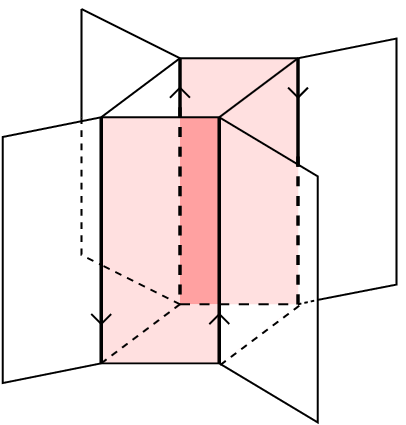}}=  
\raisebox{-28pt}{\includegraphics[height=0.8in]{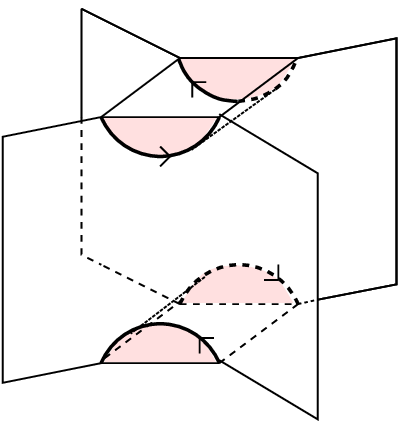}}+ 
\raisebox{-28pt}{\includegraphics[height=0.8in]{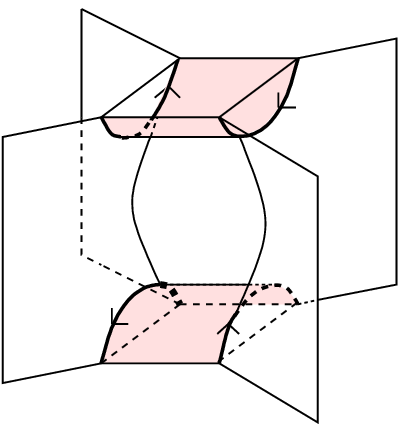}}
 & \text{(SqR)}
}$$
\end{lem}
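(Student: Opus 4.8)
The plan is to derive each of the four identities from the defining local relations (3D), (CN), (S) and ($\Theta$). Two routes are available and essentially equivalent: one may either manipulate locally inside a ball, or invoke the defining property of the quotient $\mathbf{Foam}_{/\ell}$ and reduce the equality of a linear combination of foams to the vanishing of all of its closures $C(g'(\cdot)g)$; since the evaluation $C$ is itself determined by (3D), (CN), (S) and ($\Theta$), the second route only repackages the first. I will follow the local route, whose only real tool is the neck-cutting relation (CN): inside any foam, an embedded one-facet tube may be replaced by the prescribed sum of five decorated tubes, after which the pieces that pinch off are spheres and theta-foams, which I evaluate with (S) and ($\Theta$) after reducing any facet carrying three or more dots by (3D). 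As a preliminary I would record the reduced evaluations once: an undotted or singly-dotted sphere is $0$, a doubly-dotted sphere is $1$, and $\theta(\alpha,\beta,\gamma)=1$ precisely for $(1,2,0)$ and its permutations. From here each relation is a finite bookkeeping.

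For (RD) I would apply (CN) to the tube visible in the left-hand foam; the terms of (CN) whose pinched-off sphere carries fewer than two dots vanish by (S), and the survivors are exactly the two dotted tubes \emph{cnecka1} and \emph{cnecka2} on the right. For (4C) I would cut the necks of the two tubes joining the four cylinders, expand the two resulting copies of (CN), collect the decorated spheres and theta-foams that appear, reduce dots by (3D), and observe that the two sides of (4C) yield the same formal $\F_2[a,b,c]$-combination of foams.

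The digon and square relations (DR) and (SqR) proceed in the same way after one extra opening move. For (DR): insert a tube joining the two facets of the digon, apply (CN) along it, discard the terms in which a facet ends up carrying a theta-foam with a forbidden dot distribution, and reassemble the survivors into the right-hand side. For (SqR): the identity foam on the square web is first rewritten using (DR) on the digon(s) it contains, the newly created necks are then cut, and (S), ($\Theta$), (3D) identify the result with \emph{sqface1} $+$ \emph{sqface2}.

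I expect (SqR) to be the main obstacle. The square admits several inequivalent tubes one could cut, each expansion of (CN) produces a sizeable list of decorated spheres and theta-foams, and one must be careful that no term is silently dropped or double-counted and that the three-dot reductions are carried out on every facet that needs them. Nothing here is conceptually deep --- over $\F_2$ there are not even signs to track --- but this last verification is genuinely tedious rather than automatic, which is presumably why the lemma collects these consequences explicitly.
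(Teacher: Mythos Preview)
The paper does not actually prove this lemma: it is stated without proof, as these identities are imported from~\cite{MV} (and ultimately from Khovanov's original $\mathfrak{sl}_3$ construction). So there is no in-paper argument to compare against; your proposal is being measured against the standard derivation in the literature, and in outline it is that derivation: repeatedly apply (CN) to a well-chosen simple-facet tube, then kill the closed pieces that split off using (S), ($\Theta$) and (3D).

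Two small inaccuracies in your write-up are worth flagging, though neither is a real gap. For (RD), the closed pieces produced after cutting the neck are \emph{theta-foams}, not spheres: the internal disc of the left-hand foam gets capped top and bottom by the two discs coming from (CN), and it is ($\Theta$) --- not (S) --- that picks out exactly the two surviving terms. For (DR), the phrase ``insert a tube joining the two facets'' is misleading; one does not insert anything, one applies (CN) to the existing simple tube in the identity foam on the digon (the product of the digon face with an interval), and again the closed pieces that pinch off are theta-foams evaluated by ($\Theta$). With those corrections your sketch goes through. Your expectation that (SqR) is the bookkeeping-heavy step is correct; the cleanest route is exactly the one you name, deriving it from (DR) rather than from (CN) directly.
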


\noindent In Figure~\ref{fig:pdots} we also have a set of useful identities, called \emph{dot migration}, 
which establish the way we can exchange dots between faces. These identities can be used for the simplification of foams and are an immediate consequence of the relations in $\ell$.
\begin{figure}[h]
$$\xymatrix@R=1mm{
\raisebox{-0.27in}{\includegraphics[height=0.5in]{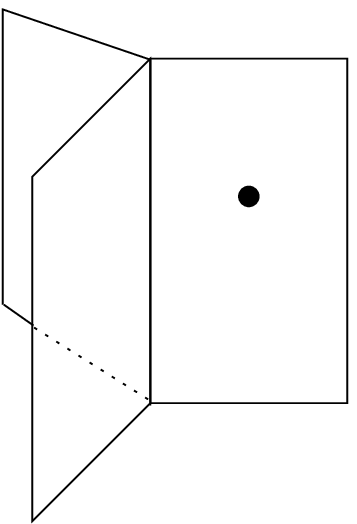}}
\,+\,
\raisebox{-0.27in}{\includegraphics[height=0.5in]{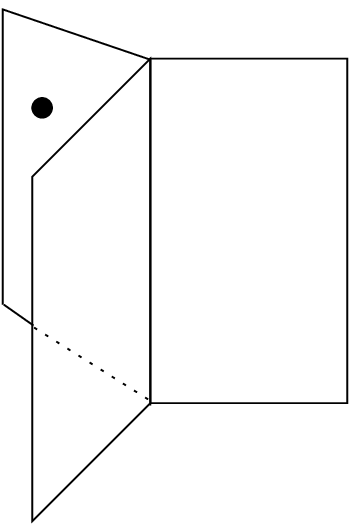}}
\,+\,
\raisebox{-0.27in}{\includegraphics[height=0.5in]{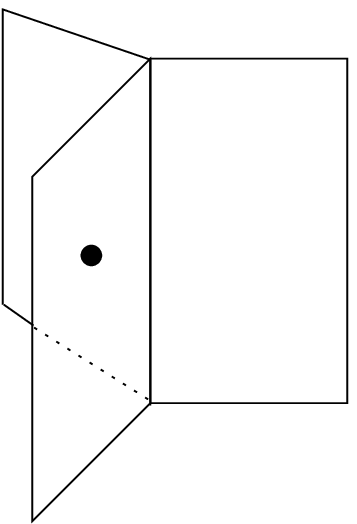}}
\,=\,a\,
\raisebox{-0.27in}{\includegraphics[height=0.5in]{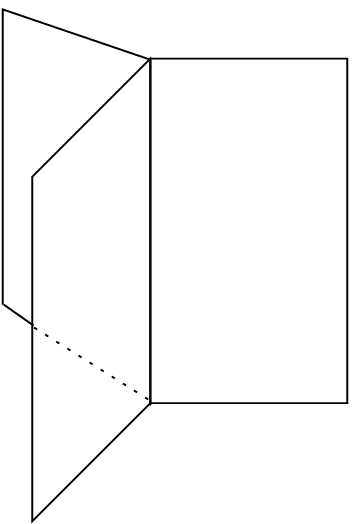}} 
\\
\raisebox{-0.27in}{\includegraphics[height=0.5in]{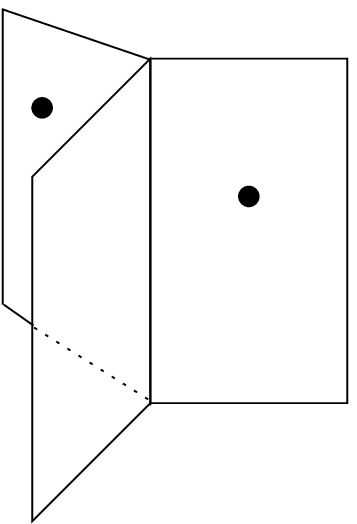}}
\,+\,
\raisebox{-0.27in}{\includegraphics[height=0.5in]{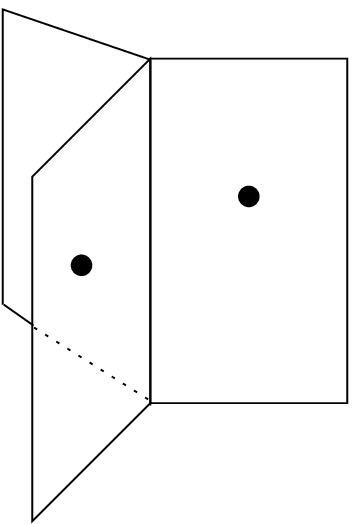}}
\,+\,
\raisebox{-0.27in}{\includegraphics[height=0.5in]{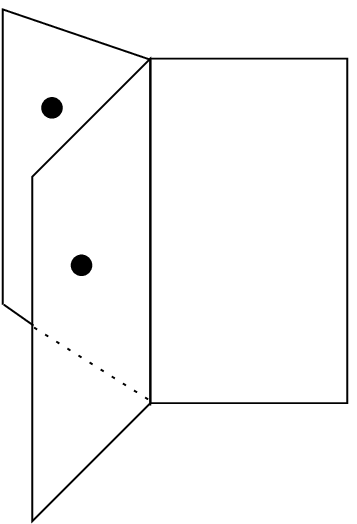}}
\,=\,b\,
\raisebox{-0.27in}{\includegraphics[height=0.5in]{pdots000}} 
\\
\raisebox{-0.27in}{\includegraphics[height=0.5in]{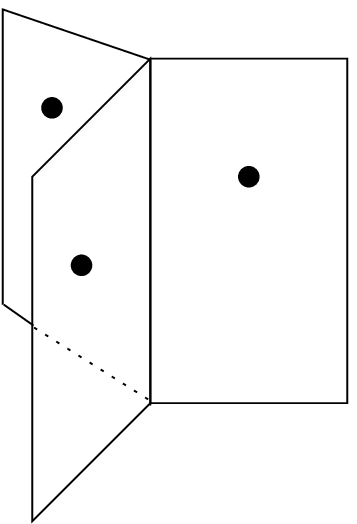}}
\,=\,c\,
\raisebox{-0.27in}{\includegraphics[height=0.5in]{pdots000}}
}$$
\caption{Dot migration}
\label{fig:pdots}
\end{figure}

\noindent One can generalize ${\bf Foam}_{/\ell}$ to open webs and foams with corners. The 
local relations are the same and a linear combination of foams with corners 
is zero if and only if for any way of closing the foams we get a linear 
combination of closed foams which is equal to zero. The grading formula has 
to be slightly changed, as we will show in the next section, where we define 
a 2-category of foams with corners more precisely.  

\subsection{The 2-category $\mathbf{2Foam}_{/\ell}$}

In order to define foamation as a 2-functor we have to define a 2-category of 
$\mathfrak{sl}_3$-foams closely related to $\mathbf{Foam}_{/\ell}$. 

\begin{defn}
Let $\FF$ be the 2-category whose objects are finite sequences of points. 
We take the points to lie along the 
y-axis with a fixed distance between them, e.g. each point has coordinates 
$(0,n,0)$ for some $n\in \N$.   

Let the 1-morphisms be finite sequences of vertical line segments possibly 
linked by horizontal line segments. Each vertical line segment is allowed to 
be a finite disjoint union of smaller subsegments of the 
form $\{0,n,z)\colon a\leq z\leq b\}$ for some fixed $n\in\N$ and 
$0\leq a<b\leq 1$.
Except for the bottom and top endpoints, which have $z$-coordinate equal to 
$0$ and $1$ respectively, each endpoint of a vertical line 
subsegment is also the endpoint of a horizontal line segment. Furthermore 
there might be points in the interior of a subsegment which are endpoints 
of horizontal line segments. We consider such a line segment to be a 
1-morphism from its 
intersection with the $x=z=0$ axis to its intersection with the line 
$\{(0,y,1)\colon y\in\R\}$ 
(after adjusting the z-variable of the latter points we get an object as 
defined above). Composition is defined by vertical glueing and rescaling. 
If one vertical line ends with a hole and the other starts with a hole, the 
composite will be a line with a hole at the place of the two old holes, i.e. 
the glueing does not close the holes. If one vertical line ends with hole and 
another starts without one, the two are non-composable.   

A 2-morphism is a foam whose boundary consists of a finite sequence of squares 
whose vertices are of the form 
$\{(x,n,z)\colon x\in\{-1,0\},\,z\in\{0,1\}\}$, for some $n\in\N$, 
and oriented horizontal line segments between their vertical 
parts. The vertical parts of the squares are allowed to have holes in them, 
the horizontal ones are not. 
The foams are required to be such that when we take the intersection with the 
$x=0$ and $x=-1$ planes, the source and target of the 2-morphism, we 
get 1-morphisms as described above. As in 
the previous subsection the foams are subject to the relations in $\ell$ 
and two open foams are equivalent if and only if all their possible closures 
are the same modulo $\ell$. 
See Figure~\ref{Fig:example} for an example. 
\begin{figure}[h!]
\hair 2pt
\centering
\figins{-30}{0.75}{foamidentotal}
\caption{Example of a 2-morphism in $\FF$}
\label{Fig:example}
\end{figure}
\noindent Horizontal composition is 
defined by horizontal glueing and rescaling and vertical composition by 
vertical glueing and rescaling. As 
above ``holes can only be glued to holes''.   
\end{defn}

\noindent Note that $\FF$ is a strict 2-category. We also define a grading on 
$\FF$, i.e. the 2-morphisms have degrees. This 
grading corresponds to the one given in the previous section 
with one extra term because the foams have corners. 

\begin{defn} The degree of a 2-morphism $f$ in $\FF$ is given by the 
formula
$$q(f)=-2\chi(f)+\chi(\partial f)+2d+\beta,$$ 
where $\beta$ is the number of vertical boundary components of 
$f$.
\end{defn} 

\noindent Note that $q$ is additive under vertical and horizontal composition. 
Note also that relation (S) implies the following result, which we will need 
in the following section.  
\begin{lem}
\label{lem:negdots}
Dotted spheres of negative degree are zero. 
\end{lem}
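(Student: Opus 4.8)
The plan is to read the statement off directly from the degree formula $q(f)=-2\chi(f)+\chi(\partial f)+2d+\beta$ together with relation (S), so the proof is essentially a bookkeeping computation followed by one citation. First I would note what a dotted sphere is as an object of $\FF$: it is a closed foam $f$ whose underlying surface is $S^2$ (a single facet, with no singular arcs), carrying some number $d\geq 0$ of dots. Being closed, it has empty boundary, so $\partial f=\emptyset$ and, in particular, it has no vertical boundary components, i.e. $\beta=0$.

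Plugging this into the degree formula gives
\[
q(f)=-2\chi(S^2)+\chi(\emptyset)+2d+0=-4+2d.
\]
Hence $q(f)<0$ forces $-4+2d<0$, i.e. $d\leq 1$, so $f$ is either the undotted sphere or the sphere carrying a single dot. By the first two equalities of relation (S) both of these are zero in $\mathbf{Foam}_{/\ell}$, and since the relations $\ell$ are imposed locally on foams with corners, the same holds for $f$ viewed as a $2$-morphism in $\FF$. This proves the lemma.

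For orientation one can note that the remaining cases are consistent with (S) but play no role here: the sphere with $d=2$ dots has degree $q(f)=0$ and equals $1$, while for $d\geq 3$ one would first apply relation (3D) to reduce the number of dots. Since the hypothesis $q(f)<0$ already pins $d$ down to $\{0,1\}$, none of this is needed. There is no real obstacle in the argument; the only point requiring a little care is the correct evaluation of the two correction terms in the degree formula for a closed sphere — namely $\chi(\partial f)=0$ and $\beta=0$ — after which the conclusion is immediate from (S).
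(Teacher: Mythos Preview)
Your argument is correct and is exactly the computation the paper has in mind: the text just says that relation (S) implies the lemma, and you have spelled out the degree count $q(f)=-4+2d$ that makes this immediate. There is nothing to add.
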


  
\section{Foamation}
\label{sec:hol}

In this section we define 2-functors $\HH_s\colon\Ucat\to \FF$, for 
$s\in \{0,1,2,3\}$. The main idea is that the 
image of a 2D-picture in $\Ucat$ should be a 3D-foam in $\FF$ such that 
vertical cross sections of that foam 
correspond to the initial 2D-picture. How to do that precisely is the content 
of the following definition. In the sequel we keep $k$ fixed in the definition 
of $\Ucat$.   

\begin{defn} 
On objects we define $\HH_s(\lambda)$ to be a subset of 
$\{(0,n,0)\colon n=0,\ldots,k-1\}$. To see which points appear in 
$\HH_s(\lambda)$ label the last point, i.e. $(0,k-1,0)$, by $s$. Label 
$(0,k-2,0)$ by $s-\lambda_1$, $(0,k-3,0)$ by $s-\lambda_1-\lambda_2$ etc. 
If one of the points has a label outside the range $\{0,1,2,3\}$, we define 
the image of $\lambda$ to be zero. Otherwise, remove the points labelled 
$0$ or $3$. The image of $\lambda$ is given by the remaining points.    

On 1-morphisms we define $\HH_s(\cal{E}_{\ii}1_{\lambda})$ as follows: 
first take $k$ vertical line segments $v_n$, where $n=0,\ldots,k-1$, with lower 
vertices $\{(0,k-1-n,0)\colon n=0,\ldots,k-1\}$ and upper vertices 
$\{(0,k-1-n,1)\colon n=0,\ldots,k-1\}$. Suppose 
$\cal{E}_{\ii}=\cal{E}_{\epsilon_{1}i_1}\cdots\cal{E}_{\epsilon_{m}i_m}$. 
Let $h_j=1-j/(m+1)$. For each $\cal{E}_{\epsilon_j i_j}$ add a horizontal line 
segment with height $h_j$ between $v_{i_j-1}$ and $v_{i_j}$. If $\epsilon_j=+$ 
orient the horizontal line segment to the right, if not orient it to the left. 
Then proceed as follows: 
label the lower vertex of $v_0$ by $s_{0}=s$ and the lower vertex of 
$v_n$ by $s_n=s-\lambda_1-\cdots-\lambda_{n}$ for $n=1,\dots,k-1$. On each 
$v_n$ this label remains constant until we meet a point where $v_n$ is glued 
to a 
horizontal line segment (when moving upwards). When we pass such a glueing 
point the label changes by $+1$ if the horizontal line segment is oriented 
towards $v_n$ and by $-1$ if not. This way all line subsegments of the 
$v_n$ are labelled. If there is a subsegment which has a label outside the 
range $\{0,1,2,3\}$ we define $\HH_s(\cal{E}_{\ii}1_{\lambda})=0$. If not, 
remove the subsegments with label $0$ or $3$ and take 
$\HH_s(\cal{E}_{\ii}1_{\lambda})$ to be the remaining graph. 

Finally, on 2-morphisms we define $\HH_{s}$ as follows:


\begin{figure}[h!]
\hair 2pt
\labellist
\pinlabel $s_{k-1}$ at 0 -50
\pinlabel $s_{i}$ at 180 -50
\pinlabel $s_{i-1}$ at 360 -50
\pinlabel $s$ at 540 -50
\pinlabel $s$ at 540  230
\pinlabel $s_{i}-1$ at 180  230 
\pinlabel $s_{i-1}+1$ at 360  230
\pinlabel $s_{k-1}$ at 0  230
\endlabellist
\centering
\raisebox{0pt}{\xy
 (0,8);(0,-8); **\dir{-} ?(.5)*\dir{>}+(2.3,0)*{\scriptstyle{}};
 (0,-11)*{ i};(0,11)*{ i};
 (6,2)*{ \lambda};
 (-8,2)*{ \lambda +i_X};
 (-10,0)*{};(10,0)*{};
 \endxy}$\quad\mapsto\quad$ \figins{-30}{0.75}{foamidentotal}
\end{figure}

\vskip30pt

\begin{figure}[h!]
\hair 2pt
\labellist
\pinlabel $s_{k-1}$ at 0 -50
\pinlabel $s_{i}$ at 180 -50
\pinlabel $s_{i-1}$ at 360 -50
\pinlabel $s$ at 540 -50
\pinlabel $s$ at 540  230
\pinlabel $s_{i}-1$ at 180  230 
\pinlabel $s_{i-1}+1$ at 360  230
\pinlabel $s_{k-1}$ at 0  230
\endlabellist
\centering
\raisebox{0pt}{\xy $\Uup_{i,\lambda}$ \endxy}$\quad\quad\quad\mapsto\quad$ \figins{-30}{0.75}{foamidentotaldot}
\end{figure}
\vskip20pt
\noindent We have indicated the labels of the facets, with the $s_n$ as 
defined above. 

Suppose $\cal{E}_{\ii}=\cal{E}_{\epsilon_{1}i_1}\cdots\cal{E}_{\epsilon_{m}i_m}$. 
Let $h_j=1-(j/m+1)$. We define $H_s(1_{\cal{E}_{\ii}1_{\lambda}})$ as above, but with more 
horizontal sheets. To be precise, for each $j$, we add a horizontal sheet with 
height $h_j$ between the $i_j-1$-th and the $v_{i_j}$th vertical sheets. 
If $\epsilon_j=+$ we orient the horizontal sheet as shown in the picture 
above, if not we give it the opposite orientation. The labellings of the 
facets are as indicated above, and depend on the orientation of the horizontal 
sheets. If one of the labels is outside the range 
of $\{0,1,2,3\}$, then the image of $1_{\cal{E}_{\ii}1_{\lambda}}$ is zero. If all 
labels are within range, remove 
the facets labelled $0$ or $3$. The image of $1_{\cal{E}_{\ii}1_{\lambda}}$ is the 
remaining foam. Let us do one concrete example to clarify our definition. 
Suppose $k=3$, $s=2$ and 
$\cal{E}_{(+1,-2)}=\cal{E}_{+1}\cal{E}_{-2}$ and $\lambda=(0,0)$. 
Then $H_2(1_{\cal{E}_{(+1,-2)}1_{(0,0)}})$ is given by the foam obtained from 
\begin{figure}[h!]
\hair 2pt
\labellist
\pinlabel $2$ at 10 30
\pinlabel $2$ at 180 30
\pinlabel $3$ at 10  137 
\pinlabel $2$ at 350 30
\pinlabel $1$ at 180 80
\pinlabel $0$ at 180 137
\pinlabel $3$ at 350 137
\endlabellist
\centering
\raisebox{0pt}{\figins{-30}{1.25}{foamexample}}
\end{figure}
\vskip1pt
\noindent by removing the facets labelled $0$ and $3$. Note that 
$$\cal{E}_{(+1,-2)}1_{(0,0)}=1_{(3,-3)}\cal{E}_{+1}1_{(1,-2)}\cal{E}_{-2}1_{(0,0)}$$
and that the pairs $(3,-3)$, $(1,-2)$ and $(0,0)$ correspond precisely to the 
differences between the labels on the right and left vertical sheets on the 
level above, between and under the two horizontal sheets respectively.  

Next we define $H_s$ for the other generating 2-morphisms. For shortness we 
omit the vertical sheets which are unimportant in the pictures. The labels 
are calculated as indicated in the pictures, and labels outside the indicated 
range give rise to a zero image. If all labels are within range, the facets 
labelled $0$ or $3$ have to be removed to obtain 
the final image under $H_s$. Dots on the 2-morphisms are mapped to dots 
on the corresponding facets. We only give the images of the 2-morphisms with 
one particular orientation. For the other orientations just reorient the foams 
and relabel the facets accordingly.   

\begin{figure}[h!]
\hair 2pt
\labellist
\pinlabel $s_{i}$ at 20 0
\pinlabel $s_{i-1}$ at 210 0
\pinlabel $s_{i}-2$ at 20 210 
\pinlabel $s_{i-1}+2$ at 180 210
\pinlabel $s_i-1$ at 90 75
\pinlabel $s_i-1$ at 125 130
\pinlabel $s_{i-1}+1$ at 280 80
\pinlabel $s_{i-1}+1$ at 310 125 
\endlabellist
\centering
\raisebox{0pt}{\xy $\Ucross_{i,i,\lambda}$ \endxy}$\qquad\qquad\quad\mapsto\quad$ 
\figins{-30}{1.25}{foamdumb}
\end{figure}
\vskip20pt

\begin{figure}[h!]
\hair 2pt
\labellist
\pinlabel $s_{i+1}$ at 30 0
\pinlabel $s_{i}$ at 190 0
\pinlabel $s_{i-1}$ at 375 0
\pinlabel $s_{i}$ at 210  210 
\pinlabel $s_{i-1}+1$ at 350  210
\pinlabel $s_{i+1}-1$ at 10  210
\pinlabel $s_i+1$ at 90 75
\pinlabel $s_i+1$ at 310 135
\endlabellist
\centering
\raisebox{0pt}{\xy $\Ucross_{i,i+1,\lambda}$ \endxy}$\qquad\qquad\quad\mapsto
\quad$ \figins{-30}{1.25}{foamdumbij}
\end{figure}
\vskip20pt
\noindent We define $\Ucross_{i-1,i,\lambda}$ similarly. 

If $j>i+1$ we define 

\begin{figure}[h!]
\hair 2pt
\labellist
\pinlabel $s_{j}$ at 20 0
\pinlabel $s_{j-1}$ at 200 0
\pinlabel $s_{i}$ at 360 0
\pinlabel $s_{j-1}+1$ at 175  210 
\pinlabel $s_{i}-1$ at 350  210
\pinlabel $s_{j}-1$ at 10  210
\pinlabel $s_{i-1}$ at 540 0
\pinlabel $s_{i-1}+1$ at 510 205
\endlabellist
\centering
\raisebox{0pt}{$\Ucross_{i,j,\lambda}$}$\qquad\mapsto
\quad$ \figins{-30}{1.25}{foamdumbij2}
\end{figure}
\vskip20pt

\noindent The images of the cups and caps are defined as follows:
\begin{figure}[h!]
\hair 2pt
\labellist
\pinlabel $s_{i}$ at 20 0
\pinlabel $s_{i-1}$ at 200 0
\pinlabel $s_{i-1}+1$ at 145 130
\pinlabel $s_i-1$ at 310 130
\endlabellist
\centering
\raisebox{0pt}{\xy $\Ucupl_{i,\lambda}$ \endxy}$\qquad\quad\mapsto
\quad$ \figins{-30}{1.25}{foamcup}
\end{figure}
\vskip20pt

\begin{figure}[h!]
\hair 2pt
\labellist
\pinlabel $s_{i}$ at 20 0
\pinlabel $s_{i-1}$ at 210 0
\pinlabel $s_{i-1}+1$ at 295 70
\pinlabel $s_i-1$ at 110 70
\endlabellist
\centering
\raisebox{0pt}{\xy $\Ucapl_{i,\lambda}$ \endxy}$\quad\qquad\mapsto\quad$ 
\figins{-30}{1.25}{foamcap}
\end{figure}
\vskip20pt
\end{defn}
We are now ready to prove the main theorem of our paper. 
\begin{thm}
\label{thm:main}
$\HH_s\colon \Ucat\to \FF$ is a degree preserving strict $\F_2$-linear 
2-functor, for any $s\in\{0,1,2,3\}$. 
\end{thm}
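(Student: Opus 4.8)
The plan is to exploit the fact that $\Ucat$ is defined by generators and relations, so it is enough to (a) check that $\HH_s$ sends each generating $2$-morphism to a foam of the prescribed degree (or to $0$ whenever a facet label leaves $\{0,1,2,3\}$); (b) check that $\HH_s$ is $\F_2$-linear, sends identities to identities, and is compatible with horizontal and vertical composition; and (c) check that $\HH_s$ carries each defining relation of $\Ucat$ to a genuine identity of foams modulo $\ell$. Items (a) and (b) are bookkeeping. On objects and $1$-morphisms well-definedness is immediate from the labelling rule, using the dictionary $\lambda_i=s_{i-1}-s_i$. For the degrees in (a) one reads $\chi(f)$, $\chi(\partial f)$, the dot number $d$ and the number $\beta$ of vertical boundary squares directly off the local pictures for $\Uup$, $\Ucross$ (all three cases $|i-j|=0,1,>1$), $\Ucupl$, $\Ucapl$ and their reorientations, and matches $-2\chi(f)+\chi(\partial f)+2d+\beta$ against $i\cdot i$, $-i\cdot j$ and $1\pm\lambda_i$. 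Since $q$ is additive under composition and the Khovanov--Lauda degrees are too, degree preservation on all $2$-morphisms follows from the generators. Strictness and $\F_2$-linearity are built into the construction: horizontal (resp. vertical) composition in $\Ucat$ is juxtaposition (resp. stacking) of diagrams, which foamation turns into horizontal (resp. vertical) glueing of foams, matching composition in $\FF$, and $1_{\cal{E}_{\ii}\onel}$ is sent to $\HH_s(1_{\cal{E}_{\ii}\onel})$, which is the identity $2$-morphism on $\HH_s(\cal{E}_{\ii}\onel)$.

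The substance is (c), which I would treat family by family. The biadjointness relations \eqref{eq_biadjoint1}--\eqref{eq_biadjoint2} and the cyclicity relations \eqref{eq_cyclic_dot}, \eqref{eq_cyclic_cross-gen} become, under $\HH_s$, planar isotopies of foams (or both sides collapse to $0$ because a facet is labelled $0$ or $3$), hence hold on the nose; this also legitimises the degree-zero $2$-morphisms \eqref{eq_crossl-gen}--\eqref{eq_crossr-gen}. For the $\mathfrak{sl}_2$/NilHecke relations \eqref{eq_nil_rels}, \eqref{eq_nil_dotslide}, \eqref{eq_ident_decomp} and the bubble axioms \eqref{eq_positivity_bubbles}, \eqref{eq:redtobubbles}, \eqref{eq_infinite_Grass}, I would identify the image of a clockwise/counterclockwise dotted bubble as a dotted sphere or a theta-like foam on the relevant pair of facets: vanishing of negative-degree bubbles then follows from Lemma~\ref{lem:negdots}, the value of the degree-zero bubble from the sphere relation (S) and the ($\Theta$) relation, the infinite Grassmannian relation from the evaluation $C(-)$ of closed foams, and the $R2$-type NilHecke relation and dot-slide from neck-cutting (CN), the relation (RD) of Lemma~\ref{lem:identities}, dot migration (Figure~\ref{fig:pdots}) and (3D). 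Finally, the $R(\nu)$-relations \eqref{eq_downup_ij-gen}, \eqref{eq_r2_ij-gen}, \eqref{eq_dot_slide_ij-gen}, \eqref{eq_r3_easy-gen}, \eqref{eq_r3_hard-gen} for $i\neq j$ split into two cases: when $|i-j|>1$ the two strands use disjoint pairs of vertical sheets, so their images commute by an isotopy and the relations are immediate, while when $|i-j|=1$ the strands share one vertical sheet and the $R2$- and cubic $R3$-relations translate into the square relation (SqR) and the digon relation (DR) of Lemma~\ref{lem:identities}, together with (4C) and dot migration for the dotted variants.

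The main obstacle I expect is the $|i-j|=1$ case analysis, and within it the cubic relation \eqref{eq_r3_hard-gen} and the square-identity decomposition \eqref{eq_ident_decomp}: foamation here produces genuinely three-dimensional foams with facets of thickness up to two sharing singular arcs, and one must verify that (SqR), (DR) and (4C) encode exactly the right-hand sides, including the bubble-decorated correction terms. A secondary but pervasive subtlety is tracking which weights and colour patterns push a facet label out of $\{0,1,2,3\}$: many relations hold only because \emph{both} sides vanish, so one must confirm that the two sides vanish simultaneously in every such case, and that when neither vanishes the foams surviving the deletion of the $0$- and $3$-labelled facets are precisely the claimed images. Once these local foam identities are established, the theorem follows.
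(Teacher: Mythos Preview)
Your outline is correct and follows essentially the same route as the paper: verify degree preservation on generators, observe that horizontal/vertical glueing realises composition, and then check every defining relation of $\Ucat$ family by family, with biadjointness/cyclicity reducing to isotopies, the bubble axioms to Lemma~\ref{lem:negdots} and (S)/($\Theta$), and the hard work concentrated in \eqref{eq_ident_decomp} and \eqref{eq_r3_hard-gen}. Two points where the paper is more explicit than your sketch: first, since fake bubbles are formal symbols rather than composites of generators, the paper \emph{defines} their images in $\FF$ by hand and then verifies these satisfy \eqref{eq_infinite_Grass}, rather than deducing it from $C(-)$; second, the precise bookkeeping of which foam relation handles which KL relation differs slightly from your guesses (e.g.\ the NilHecke dot-slide \eqref{eq_nil_dotslide} corresponds directly to (DR), and \eqref{eq_ident_decomp} becomes, according to the values of $s$ and $\lambda$, an isotopy, (SqR), (DR) or (CN)).
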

\begin{proof} Clearly the composition rules are preserved by $\HH_s$, 
for any $s\in\N$. It is also not hard to check that $\HH_s$ preserves the 
degrees. Note that one has to be careful and remove facets labelled $0$ or $3$ 
before computing the degree. 

What is left to prove is that $\HH_s$ respects the 
relations in $\Ucat$. We first prove this for the $\mathfrak{sl}_2$ relations. 
Relations \ref{eq_biadjoint1}, \ref{eq_biadjoint2} and \ref{eq_cyclic_dot} 
are preserved because $\HH_s$ maps the left and right sides of each relation 
to isotopic foams.

To see that relations~\ref{eq_positivity_bubbles} are preserved we have to 
realize that the image of a bubble with dots is given by $k-1$ vertical 
sheets with a tube with dots between two of them, possibly with some vertical 
facets removed because they are labelled $0$ or $3$. 
By the relations (CN) and (RD) in $\FF$ 
such a foam is equivalent to a $\F_2[a,b,c]$-linear combination of foams which are given 
by vertical sheets with dots and without holes, and spheres with dots. 
Since $\HH_s$ preserves degrees and $\F_2[a,b,c]$ has non-negative grading 
only, each of these foams in the linear combination has to have 
negative degree. Vertical sheets with dots have non-negative degree, so the 
spheres with dots have to have negative degree. Therefore, by 
Lemma~\ref{lem:negdots} everything has to be zero.    

The same line of reasoning shows that the image of a degree zero bubble equals 
the identity. In this case there is only one non-zero term in the linear 
combination, which is the term with dotless vertical sheets and degree zero 
sphere with dots. Therefore we get plus or minus the identity. 

For the next relations we have to determine the image of fake bubbles. Note 
that the fake bubbles are uniquely defined in terms of ordinary bubbles 
by the infinite Grassmannian relation~\ref{eq_infinite_Grass} and the 
additional condition which says that the degree zero fake bubbles are equal to 
one. Since we know the image of the ordinary bubbles, all we have to do is 
indicate foams which satisfy the infinite Grassmannian relation and 
satisfy the degree zero condition in $\FF$. It suffices to do this for 
$k=2$, since these relations are essentially $\mathfrak{sl}_2$ relations. 
For $\lambda\geq 0$ and $j< \lambda+1$, we put
\begin{figure}[h!]
\hair 2pt
\labellist
\pinlabel $\sum_{r=0}^{\lambda+1}$ at -40 100
\pinlabel $s-\lambda$ at 40 0
\pinlabel $s$ at 195 0
\pinlabel $r$ at 40 110
\pinlabel $j$ at 125 110
\pinlabel $\lambda+1-r$ at 253 110
\endlabellist
\centering
\raisebox{0pt}{\xy 0;/r.15pc/:
 (0,0)*{\ccbub{-\lambda-1+j}{}};
  (4,8)*{\lambda};
 \endxy}$\quad\mapsto\qquad\qquad$ 
\figins{-45}{1.40}{foamfakebub}
\end{figure}
\vskip20pt
\noindent where a dot to the power $r$ means $r$ dots. Note that the labels 
of the interiors of the rightmost and leftmost discs are equal to $s-1$ 
and $s-\lambda+1$ respectively. If one of these labels is equal to $0$ or $3$ 
we have to remove the corresponding disc, so we cannot put dots on that disc. 
To avoid contradictions we simply assume that terms with a disc labelled $0$ 
or $3$ and a positive number of dots on that disc are equal to zero in the 
sum. By 
straightforward calculations one can check that with this definition 
the images of the 
fake bubbles satisfy the infininite Grassmannian relation and the degree 
zero condition. The images of the fake bubbles with the opposite orientation 
are defined likewise.       

Next we prove the right relation in~\ref{eq:redtobubbles}. The left one can 
be shown using similar arguments. Again, since this is an $\mathfrak{sl}_2$ 
relation we can assume that $k=2$ without loss of generality. First suppose 
that $\lambda<0$. Then the sum on the right-hand side of the equation is equal 
to zero, so we have to show that the image of the left-hand side is equal to 
zero as well. That image is given by 
\begin{figure}[h!]
\hair 2pt
\labellist
\pinlabel $s-\lambda+2$ at 50 0
\pinlabel $s$ at 190 0
\pinlabel $s+1$ at 190 200
\pinlabel $s-\lambda+1$ at 0 200
\endlabellist
\centering
\figins{-45}{1.40}{foamredtobub}
\end{figure}
\vskip20pt

\noindent Note that the left and right inner discs are labelled $s-\lambda$ 
and $s+2$ respectively. Since we are assuming that $\lambda<0$ we see 
that this foam is zero for $s\ne 0$ because $s$ or $s-\lambda+2$ will be 
outside the range $\{0,1,2,3\}$. For the same reason the foam is zero when 
$s=0$ and $\lambda<-1$. If $s=0$ and $\lambda=-1$ the foam we get is a facet 
with a bubble without dots (after removing the facets labelled $0$ and $3$). 
By (CN) and ($\Theta$) such a foam is equal to zero. 

If $\lambda\geq 0$, we can apply (DR) to the singular tube in the 
picture. 
After removing possible bubbles with dots we get a linear combination 
of foams like 
\begin{figure}[h!]
\hair 2pt
\labellist
\pinlabel $s-\lambda+2$ at 50 0
\pinlabel $s$ at 190 0
\pinlabel $s+1$ at 190 200
\pinlabel $s-\lambda+1$ at 0 200
\endlabellist
\centering
\figins{-45}{1.40}{foamident}
\end{figure}
\vskip20pt 
\noindent with some dots.
\newpage 
The foam corresponding to the right-hand side of relation~\ref{eq:redtobubbles}
is a linear combination of foams of the form 
\begin{figure}[h!]
\hair 2pt
\labellist
\pinlabel $s-\lambda+2$ at 60 0
\pinlabel $s$ at 190 0
\pinlabel $s+1$ at 195 230
\pinlabel $s-\lambda+1$ at 5 230
\pinlabel $\alpha$ at 37 143 
\pinlabel $\beta$ at 128 143
\pinlabel $\gamma$ at 209 143
\pinlabel $\delta$ at 128 70
\endlabellist
\centering
\figins{-45}{1.50}{foamredtobub2}
\end{figure}
\vskip5pt
\noindent for certain non-negative integers $\alpha,\beta,\gamma$ and $\delta$. 
After applying (RD) to both internal discs and removing the bubbles and the 
spheres we again obtain a linear combination of foams as pictured in  
the second last figure with some dots. 
All these calculations are straightforward 
and one can check that in all cases the right relation 
in~\ref{eq:redtobubbles} is preserved by $\HH_s$ after moving the dots around 
using dot migration. 

Let us now show that relation~\ref{eq_ident_decomp} is preserved. We show this 
for the first relation, the second being similar. We can assume 
that $\lambda$ is a non-negative integer (instead of a sequence of integers). 
It suffices to consider the cases $s=2,3$ for $\lambda=0,1,2,3$. For $s=1,2$ 
the same arguments work because the foams in that case can be obtained from 
those for $s=2,3$ by applying a symmetry in the $xz$-plane which slices the 
foams in the middle. Therefore, let $s=2,3$. The left-hand side of 
relation~\ref{eq_ident_decomp} is mapped to 
\vskip10pt
\begin{figure}[h!]
\hair 2pt
\labellist
\pinlabel $s-\lambda$ at 35 0
\pinlabel $s$ at 190 0
\pinlabel $s$ at 215 210
\pinlabel $s-1$ at 200 100
\pinlabel $s-\lambda+1$ at 48 100
\pinlabel $s-\lambda$ at 25 210 
\endlabellist
\centering
\figins{-45}{1.50}{foamidentdecl}
\end{figure}
\vskip5pt    
\noindent The right-hand side is mapped to 
\begin{figure}[h!]
\hair 2pt
\labellist
\pinlabel $s-\lambda$ at 40 0
\pinlabel $s$ at 190 0
\pinlabel $s$ at 215 210
\pinlabel $s-\lambda$ at 25 210 
\pinlabel $s$ at 645 0
\pinlabel $s-\lambda$ at 495 0
\pinlabel $s$ at 670 210
\pinlabel $s-\lambda$ at 485 210
\pinlabel $+\sum_{f=0}^{\lambda-1}\sum_{g=0}^{f}\sum_{\alpha=0}^{\lambda+1}$ 
at 350 100 
\pinlabel $f-g$ at 575 60
\pinlabel $\alpha$ at 490 100
\pinlabel $g$ at 585 100
\pinlabel $\lambda+1-\alpha$ at 702 100 
\pinlabel $\lambda-1-f$ at 590 160
\endlabellist
\centering
\figins{-45}{1.50}{foamidentdecr}
\end{figure}
\vskip5pt    
\noindent In the figure above we have omitted some orientations to avoid the 
cluttering of lines. In the first foam the orientations can be obtained by 
the rule which says that at each trivalent vertex the edges are all oriented 
inwards or all outwards. The circles in the vertical sheets in the second 
foam are oriented such that neighboring arcs have opposite orientations. 
We have also omitted the labels of some facets. From the orientations of 
the edges and the labels which we have given one can easily compute the 
missing ones. To see that the first relation in~\ref{eq_ident_decomp} is 
preserved one first has to use (RD) on all discs in the tube in 
the second foam above. Then use (3D) and (S) and, if necessary, dot migration. 
After doing that we get the following: for $s=2,\lambda=0$ and $s=3,\lambda=1$ 
relation~\ref{eq_ident_decomp} corresponds to an isotopy between foams. For 
$s=2,\lambda=1$ it corresponds to the (SqR) relation on foams. For 
$s=2,\lambda=2$ and $s=3,\lambda=2$ it corresponds to (DR). 
For $s=3,\lambda=3$ it corresponds to (CN). For all other values of $s$ and 
$\lambda$ the functor $\HH_s$ maps all foams to zero. 

The last type of $\mathfrak{sl}_2$ relations we have to check are the NilHecke 
relations~\ref{eq_nil_rels} and~\ref{eq_nil_dotslide}. Let us first have a look 
at the first relation in \ref{eq_nil_rels}. The diagram is mapped to 
\vskip10pt
\begin{figure}[h!]
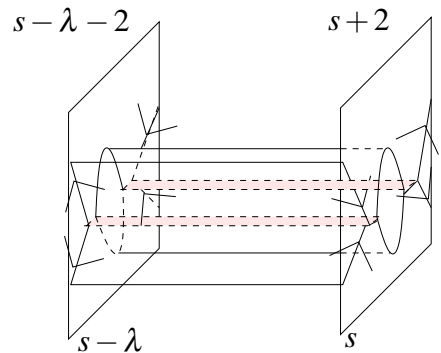


\hair 2pt
\labellist
\pinlabel $s$ at 180 0
\pinlabel $s-\lambda$ at 30 0
\pinlabel $s+2$ at 185 200
\pinlabel $s-\lambda-2$ at 5 200 
\endlabellist
\centering
\figins{-45}{1.70}{foamnilrels1}
\caption{The l.h.s. of the first NilHecke relation}
\label{fig:nil}
\end{figure}
\vskip20pt    
\noindent The curves in the middle of each vertical sheet have the same 
orientation as the line segments on that sheet. We have omitted two labels, 
which can be easily computed using 
the orientations. It is easy to see that this foam is always equal to zero. 
Either it is mapped to zero, because some label is out of range, or we get 
a foam with a facet with a bubble without dots, which is equal to zero by 
(RD) and (S). Therefore the first NilHecke relation is always preserved by 
$\HH_s$. 

The second NilHecke relation in~\ref{eq_nil_rels} is always preserved by 
$\HH_s$ because the two sides of the relation are mapped to isotopic foams. 

The third NilHecke relation, which is the one in \ref{eq_nil_dotslide}, 
corresponds to the (DR) relation for all values of $s$ and $\lambda$ 
such that $\HH_s$ is non-zero. The foams can be easily read off from the 
definition of $\HH_s$ above. 

The next relations we have to check are the ones in~\ref{eq_downup_ij-gen}.  
Consider the first one, the second being analogous. 
Suppose $k=3$ and $(i,j)=(1,2)$.
\newpage
\noindent The left-hand side of the equation is mapped to 
\vskip10pt
\begin{figure}[h!]
\hair 2pt
\labellist
\pinlabel $s$ at 355 0
\pinlabel $s+1$ at 370 200
\pinlabel $s-\lambda_1$ at 200 0
\pinlabel $s-\lambda_1-2$ at 170 200
\pinlabel $s-\lambda_1-\lambda_2$ at 55 0
\pinlabel $s-\lambda_1-\lambda_2+1$ at -20 200
\endlabellist
\centering
\figins{-45}{1.50}{foamdoubcross}
\end{figure}
\vskip10pt    
\noindent Note that to be completely accurate one would have to draw the 
crossings in the middle vertical sheet as little dumbbles, but that would make 
the picture much harder to read. Note that in that middle sheet the regions 
between the arcs are labelled $s-\lambda_1-1$. Suppose that all labels are 
within the allowed range. Then either $s-\lambda_1=3$ or $s-\lambda_1-2=0$ 
has to hold. Now look closely at the singular curve in the middle sheet and 
you see that the foam can be isotoped to 
\vskip10pt
\begin{figure}[h!]
\hair 2pt
\labellist
\pinlabel $s$ at 355 0
\pinlabel $s+1$ at 370 200
\pinlabel $s-\lambda_1$ at 200 0
\pinlabel $s-\lambda_1-2$ at 170 200
\pinlabel $s-\lambda_1-\lambda_2$ at 55 0
\pinlabel $s-\lambda_1-\lambda_2+1$ at -20 200
\endlabellist
\centering
\figins{-45}{1.50}{foamexample}
\end{figure}
\vskip10pt    
\noindent The case $j=i-1$ is similar. If $|i-j|>1$ the isotopy is 
straightforward, because the horizontal sheets are glued to different 
vertical sheets.  

Next we prove that the $R(\nu)$-relations are preserved. 
The first $R(\nu)$-relation is not too hard. First suppose $k=3$ and 
$(i,j)=(1,2)$. The left-hand side of relation~\ref{eq_r2_ij-gen} is mapped to 
a foam like the one in Figure~\ref{fig:nil}, except that both horizontal sheets have the same 
orientation. Note that in the middle sheet the two arcs have opposite 
orientation. There are now two possible non-zero cases. 
The middle region of the middle 
sheet is labelled $0$ or $3$, and therefore corresponds to a hole, or 
it is labelled $1$ or $2$, in which case it is a disc 
bounded by a singular curve. In the latter case one can apply the (RD) 
relation to the disc and obtain the desired sum of identity foams with dots. 
In the former case there are two singular curves formed by the arcs that do 
not bound the hole. One can apply relation (DR) to obtain the desired sum 
of identity foams with dots. If $i\cdot j=0$, the horizontal sheets in the 
foam are glued to different vertical sheets and we can apply a simple 
isotopy. 

The second set of $R(\nu)$ relations, given in~\ref{eq_dot_slide_ij-gen}, 
is preserved by $\HH_s$ because the $i$-strands before and 
after the crossing are mapped to the same facet in the corresponding foam 
and so are the $j$-strands, since $i\ne j$. Therefore the dots before and 
after the crossing both live on the same facet and the relation is preserved 
because dots can freely move on a facet.    

Finally we have to prove relations~\ref{eq_r3_easy-gen} and 
\ref{eq_r3_hard-gen}. For $i\ne k$ and $i\cdot j=0$, relation 
\ref{eq_r3_easy-gen} clearly corresponds to an isotopy of the corresponding 
foams, because the horizontal sheet corresponding to the $j$-strand is not 
attached to the same vertical sheet as the one corresponding to the 
$i$-strand.  

The proof of relation~\ref{eq_r3_hard-gen} is slightly harder. Suppose 
$i=k$ and $i\cdot j\ne 0$. Note that the 
case $i=j=k$ was already proved above. Here we only prove the 
case $j=i+1$, the case $j=i-1$ being similar. 
Without loss of generality we assume that $k=2$ and 
$i=1$. In the figure below we show the images under $\HH_s$ of the left and 
the right-hand sides of relation~\ref{eq_r3_hard-gen} respectively, which 
gives an equation of foams of the form $A-B=C$.

\begin{figure}[h!]
\hair 2pt
\labellist
\pinlabel $s$ at 810 5
\pinlabel $s-\lambda_1$ at 655 5
\pinlabel $s-\lambda_1-\lambda_2$ at 510 5  
\pinlabel $s$ at 362 5
\pinlabel $s-\lambda_1$ at 205 5
\pinlabel $s-\lambda_1-\lambda_2$ at 60 5  
\pinlabel $-$ at 425 90
\endlabellist
\centering
\figins{0}{1.50}{foamr3}$\qquad$
\figins{0}{1.50}{foamr32}
\end{figure}
\vskip20pt

\begin{figure}[h!]
\hair 2pt
\labellist
\pinlabel $s$ at 360 5
\pinlabel $s-\lambda_1$ at 205 5
\pinlabel $s-\lambda_1-\lambda_2$ at 60 5  
\endlabellist
\centering
\figins{0}{1.50}{foamr33}
\end{figure}
\vskip20pt
 
\noindent We only 
show the labels at the bottom. The other labels can be easily computed using 
the orientations of the edges. Note that the square regions 
in the middle of the middle vertical sheet of $A$ and $B$ have label 
$s-\lambda_1-2$ and $s-\lambda_1+1$ respectively. The label at the bottom of 
that sheet is $s-\lambda_1$. Taking into accound the other labels as well, we 
see that there are essentially three 
distinct non-trivial cases: $s-\lambda_1=1$, $s-\lambda_1=2$  and 
$s-\lambda_1=3$ (cases 1,2 and 3). In the first case $s-\lambda_1-2=-1$, so 
$A=0$. To see that $B$ is isotopic 
to $C$ one only has to follow the singular curves. The third case is similar, 
$B$ being zero and $A$ being isotopic to $C$. In the second case both $A$ and 
$B$ can be non-zero. In that case we get the (SqR) relation, which again can 
be seen from looking carefully at the singular curves of $A$, $B$ and $C$.          
\end{proof}


\vspace*{1cm}

\noindent {\bf Acknowledgements} 
I thank Mikhail Khovanov for suggesting this project to me and for his helpful 
input. I thank Mikhail Khovanov and Aaron Lauda for letting me copy their 
LaTeX version of the definition of $\Ucat$ almost literally and I thank them 
and Columbia University for 
their hospitality during my visits in 2008 and 2009. 

Financial support was provided by the 
Funda\c {c}\~{a}o para a Ci\^{e}ncia e a Tecnologia (ISR/IST plurianual funding) through the
programme ``Programa Operacional Ci\^{e}ncia, Tecnologia, Inova\-\c
{c}\~{a}o'' (POCTI) and the POS Conhecimento programme, cofinanced by the European Community 
fund FEDER.



\vspace{0.1in}

\noindent{ \sl \small Marco Mackaay, Departamento de Matem\'{a}tica, Universidade do 
Algarve, Campus de Gambelas, 8005-139 Faro, Portugal  and CAMGSD, 
Instituto Superior T\'{e}cnico, Avenida Rovisco Pais,  
1049-001 Lisboa, Portugal} \newline \noindent 
{\tt \small email: mmackaay@ualg.pt}

\end{document}